\newcommand{\multiline}[1]{%
  \begin{tabularx}{\dimexpr\linewidth-\ALG@thistlm}[t]{@{}X@{}}
    #1
  \end{tabularx}
}
\newcommand{\norm}[1]{\left\|#1\right\|}
\newcommand{\operator}[1]{\mathsf{#1}}
\newcommand{\A}{\operator{A}}
\newcommand{\F}{\operator{F}}
\renewcommand{\H}{\operator{H}}
\renewcommand{\d}{\operator{d}}
\newcommand{\dt}{\,\d t}
\newcommand{\dx}{\,\d\bm x}
\newcommand{\jmp}[1]{\left\llbracket#1\right\rrbracket}
\newcommand{\dprod}[1]{\left<#1\right>_{X^\star\times X}}
\newcommand{\un}[1]{{u}_N^{#1}}
\newcommand{\dissol}[1]{u_{#1}^\star}
\newcommand{\muf}[1]{\mu\left(\left|\nabla #1 \right|^2\right)}
\newcommand{\twon}[2]{\norm{#1}_{L^2(#2)}}
\newcommand{\ceil}[1]{\lceil #1 \rceil}
\newcommand{\abd}{\beta}
\newcommand{\aco}{\alpha}
\newcommand{\Flc}{L_\operator{F}}
\newcommand{\dpa}{\delta}
\newcommand{\Fsm}{\nu}
\newcommand{\tol}{\epsilon_{\textrm{tol}}}
\newcommand{\qref}[1]{q_{\ref{#1}}}
\newcommand{\cref}[1]{C_{\ref{#1}}}
\newtheorem{theorem}{Theorem}[section]
\newtheorem{lemma}[theorem]{Lemma}
\newtheorem{proposition}[theorem]{Proposition} 
\newtheorem{corollary}[theorem]{Corollary} 
\newtheorem{cor}[theorem]{Corollary}
\theoremstyle{definition}
\newtheorem{remark}[theorem]{Remark}
\title[Convergence of adaptive ILG Methods]{On the Convergence of\\ Adaptive Iterative Linearized Galerkin Methods}
\author{Pascal Heid \and Thomas P.~Wihler}
\email{heid@math.unibe.ch \and wihler@math.unibe.ch}
\address{Mathematics Institute, University of Bern, Sidlerstrasse 5, CH-3012 Bern, Switzerland}
\thanks{The authors acknowledge the financial support of the Swiss National Science Foundation (SNF), Grant No. 200021\underline{\space}182524}
\keywords{%
Numerical solution methods for quasilinear elliptic PDE,
monotone problems,
fixed point iterations,
linearization schemes,
Ka\v{c}anov method,
Newton method,
Galerkin discretizations,
adaptive mesh refinement,
convergence of adaptive finite element methods
}
\subjclass[2010]{35J62, 47J25, 47H05, 47H10, 49M15, 65J15, 65N12, 65N30, 65N50}
\begin{document}

\begin{abstract}
A wide variety of different (fixed-point) iterative methods for the solution of nonlinear equations exists. In this work we will revisit a unified iteration scheme in Hilbert spaces from our previous work~\cite{HeidWihler:18} that covers some prominent procedures (including the Zarantonello, Ka\v{c}anov and Newton iteration methods).
In combination with appropriate discretization methods so-called \emph{(adaptive) iterative linearized Galerkin (ILG) schemes} are obtained. 
The main purpose of this paper is the derivation of an abstract convergence theory for the unified ILG approach (based on general adaptive Galerkin discretization methods) proposed in~\cite{HeidWihler:18}. 
The theoretical results will be tested and compared for the aforementioned three iterative linearization schemes in the context of adaptive finite element discretizations of strongly monotone stationary conservation laws.
\end{abstract}

\maketitle

\section{Introduction}

In this paper we analyze the convergence of adaptive iterative linearized Galerkin (ILG) methods for nonlinear problems with strongly monotone operators. To set the stage, we consider a real Hilbert space $X$ with inner product~$(\cdot,\cdot)_X$ and induced norm denoted by~$\|\cdot\|_X$. Then, given a nonlinear operator~$\F:\,X\to X^\star$, we focus on the equation
\begin{equation}\label{eq:F=0}
u\in X:\qquad\F(u)=0 \quad\text{in }X^\star,
\end{equation} 
where~$X^\star$ denotes the dual space of~$X$. In weak form, this problem reads
\begin{equation}\label{eq:F=0weak}
u\in X:\qquad \dprod{\F(u),v}=0\quad\text{for all } v\in X,
\end{equation}
with~$\dprod{\cdot,\cdot}$ signifying the duality pairing in~$X^\star\times X$. For the purpose of this work, we suppose that $\F$ satisfies the following conditions: \begin{enumerate}[(F1)]
\item The operator $\F$ is \emph{Lipschitz continuous}, i.e.~it exists a constant $\Flc>0$ such that 
\begin{align*}
\left|\dprod{\F(u)-\F(v),w}\right| \leq \Flc \norm{u-v}_X \norm{w}_X,
\end{align*}
for all $u,v,w \in X$.
\item The operator $\F$ is \emph{strongly monotone}, i.e. there is a constant $\Fsm>0$ such that 
\begin{align*}
 \Fsm \norm{u-v}_X^2 \leq \dprod{\F(u)-\F(v),u-v},
\end{align*}
for all $u,v \in X$.
\end{enumerate}
Given the properties (F1) and (F2), the main theorem of strongly monotone operators states that~\eqref{eq:F=0} has a unique solution $u^\star \in X$; see, e.g., \cite[\S3.3]{Necas:86} or \cite[Theorem 25.B]{Zeidler:90}. 

\subsection*{Iterative linearization}
The existence of a solution to the nonlinear equation~\eqref{eq:F=0} can be established in a \emph{constructive} way. This can be accomplished, for instance, by transforming~\eqref{eq:F=0} into an appropriate fixed-point form, which, in turn, induces a potentially convergent fixed-point iteration scheme. To this end, following our approach in~\cite{HeidWihler:18}, for some given~$v\in X$, we consider a linear and invertible \emph{preconditioning operator} $\A[v]:\,X\to X^\star$. Then, applying~$\A[v]^{-1}$ to~\eqref{eq:F=0} leads to the fixed-point equation
\[
u=u-\A[v]^{-1}\F(u).
\]
For any suitable initial guess~$u^0\in X$, the above identity motivates the iteration scheme
\[
u^{n+1}=u^n-\A[u^n]^{-1}\F(u^n),\qquad n\ge 0.
\]
Equivalently, we have
\begin{equation}\label{eq:fp0}
u^{n+1}\in X:\qquad \A[u^n]u^{n+1}=\A[u^n]u^n-\F(u^n),\qquad n\ge 0.
\end{equation}
For given~$u^n\in X$, we emphasize that the above problem of solving for~$u^{n+1}$ is \emph{linear}; consequently, we call~\eqref{eq:fp0} an \emph{iterative linearization scheme} for~\eqref{eq:F=0}. Letting
\begin{equation}\label{eq:f(u)}
f:X \to X^\star,\qquad f(u):=\A[u]u-\F(u),
\end{equation}
we may write
\begin{align}\label{eq:fp1}
\A[u^n]u^{n+1}=f(u^n), \qquad n \geq 0.
\end{align}
In order to discuss the weak form of~\eqref{eq:fp1}, for a prescribed~$u\in X$, we introduce the bilinear form
\begin{equation}\label{eq:Aweak}
a(u;v,w):=\dprod{\A[u]v,w}, \qquad v,w\in X.
\end{equation}
Then, based on~$u^n\in X$, the solution~$u^{n+1}\in X$ of~\eqref{eq:fp1} can be obtained from the weak formulation
\begin{equation}\label{eq:itweak}
a(u^n;u^{n+1},w)=\dprod{f(u^n),w}\qquad \forall w\in X.
\end{equation}
Throughout this paper, for any~$u\in X$, we assume that the bilinear form~$a(u;\cdot,\cdot)$ is uniformly coercive and bounded. The latter two assumptions refer to the fact that there are two constants~$\alpha,\beta>0$ independent of $u \in X$, such that
\begin{equation}\label{eq:coercive}
a(u;v,v) \geq \aco \|v\|_X^2 \qquad\forall v \in X,
\end{equation}
and
\begin{equation}\label{eq:continuity}
a(u;v,w) \leq \abd \norm{v}_X \norm{w}_X \qquad\forall  v,w \in X,
\end{equation}
respectively. In particular, owing to the Lax-Milgram Theorem, these properties imply the well-posedness of the solution~$u^{n+1}\in X$ of the linear equation~\eqref{eq:itweak}, for any given~$u^n\in X$.

Let us briefly review some prominent procedures that can be cast into the framework of the linearized fixed-point iteration~\eqref{eq:itweak}: For instance, we point to the Zarantonello iteration given by
\begin{align}\label{eq:zarantonelloit}
 (u^{n+1},\cdot)_X=(u^{n},\cdot)_X-\dpa \dprod{\F(u^{n}),\cdot},\qquad n\ge 0,
\end{align}
with~$\delta>0$ being a sufficiently small parameter; cf.~Zarantonello's original report~\cite{Zarantonello:60}, or the monographs~\cite[\S3.3]{Necas:86} and~\cite[\S25.4]{Zeidler:90}. A further example is the Ka\v{c}anov scheme which reads
\begin{align} \label{eq:kacanovstrong}
\A[u^{n}]u^{n+1}=g,\qquad n\ge 0,
\end{align}
in the special case that $g=\A[u]u-\F(u)$ is independent of~$u$. Finally, we mention the (damped) Newton method which is defined by
\begin{align} \label{eq:newtonstrong}
 \F'(u^{n})u^{n+1}=\F'(u^{n})u^{n}-\dpa(u^n) \F(u^{n}),\qquad n\ge 0,
\end{align}
for a damping parameter~$\delta(u^n)>0$. Here~$\F'$ signifies the G\^{a}teaux derivative of~$\F$ (provided that it exists). For any of the above three iterative procedures, we emphasize that convergence to the unique solution of~\eqref{eq:F=0} can be guaranteed under suitable conditions; see our previous work~\cite{HeidWihler:18} for details. 

\subsection*{The ILG approach}
Consider a finite dimensional subspace $X_N\subset X$. Then, the Galerkin approximation of~\eqref{eq:F=0weak} in~$X_N$ reads as follows:
\begin{equation}\label{eq:discreteproblem}
\dissol{N} \in X_N:\qquad\dprod{\F(\dissol{N}),v}=0\qquad\forall v \in X_N.
\end{equation}
We note that \eqref{eq:discreteproblem} has a unique solution~$\dissol{N}\in X_N$ since the restriction $\F|_{X_N}$ still satisfies the conditions~(F1) and~(F2) above. The \emph{iterative linearized Galerkin (ILG)} approach is based on discretizing the iteration scheme~\eqref{eq:itweak}.
Specifically, a Galerkin approximation $\un{n+1} \in X_N$ of $\dissol{N}$, based on a prescribed initial guess $\un{0} \in X_N$, is obtained by solving iteratively the linear discrete problem
\begin{equation} \label{eq:lindisproblem}
 \un{n+1} \in X_N: \qquad a(\un{n};\un{n+1},v)=\dprod{f(\un{n}),v} \qquad \forall v \in X_N,
\end{equation}
for~$n\ge 0$. For the resulting sequence~$\{u_N^n\}_{n\ge 0}\subset X_N$ of discrete solutions it is possible, based on elliptic reconstruction techniques (cf., e.g.,~\cite{LakkisMakridakis:06,MakridakisNochetto:03}), to obtain general (abstract) \emph{a posteriori} estimates for the difference to the exact solution, $u^\star\in X$, of~\eqref{eq:F=0}, i.e.~for~$\|u^\star-u_N^{n+1}\|_X$, $n\ge 0$, see~\cite[\S3]{HeidWihler:18}. Based on such \emph{a posteriori} error estimators, an \emph{adaptive ILG} algorithm that exploits an efficient interplay of the iterative linearization scheme~\eqref{eq:lindisproblem} and automatic Galerkin space enrichments was proposed in~\cite[\S4]{HeidWihler:18}; see also~\cite{CongreveWihler:17}. We refer to some related works in the context of (inexact) Newton schemes~\cite{AmreinWihler:14,AmreinWihler:15,ErnVohralik:13,El-AlaouiErnVohralik:11}, or of the Ka\v{c}anov iteration~\cite{BernardiDakroubMansourSayah:15,GarauMorinZuppa:11}.

\subsection*{Goal of this paper}

The convergence of an adaptive Ka\v{c}anov algorithm, which is based on a finite element discretization, for the numerical solution of quasi-linear elliptic partial differential equations has been studied in~\cite{GarauMorinZuppa:11}. 
Furthermore, more recently, the authors of~\cite{GantnerHaberlPraetoriusStiftner:17}  have proposed and analyzed an adaptive algorithm for the numerical solution of~\eqref{eq:F=0} within the specific context of a finite element discretization of the Zarantonello iteration~\eqref{eq:zarantonelloit}. The latter paper includes an analysis of the convergence rate which is related to the work~\cite{CarstensenFeischlPagePraetorius:14} on optimal convergence for adaptive finite element methods within a more general abstract framework. The purpose of the current paper is to generalize the adaptive ILG algorithm from~\cite{GantnerHaberlPraetoriusStiftner:17} to the framework of the \emph{unified iterative linearization scheme}~\eqref{eq:fp1}; furthermore, \emph{arbitrary (conforming) Galerkin discretizations} will be considered. In order to provide a convergence analysis for the ILG scheme~\eqref{eq:lindisproblem} within this general abstract setting, we will follow along the lines of~\cite{GantnerHaberlPraetoriusStiftner:17}, however, we emphasize that some significant modifications in the analysis are required. Indeed, whilst the theory in~\cite{GantnerHaberlPraetoriusStiftner:17} relies on a contraction argument for the Zarantonello iteration, this favourable property is not available for the general iterative linearization scheme~\eqref{eq:fp1}. To address this difficulty, we derive a contraction-like property instead. This observation will then suffice to establish the convergence of the adaptive ILG scheme, and to (uniformly) bound the number of linearization steps on each (fixed) Galerkin space similar to~\cite{GantnerHaberlPraetoriusStiftner:17}; we note that the latter property constitutes a crucial ingredient with regards to the (linear) computational complexity of adaptive iterative linearized finite element schemes.

\subsection*{Outline}

Section~\ref{sec:itlin} contains a convergence analysis of the unified iteration scheme~\eqref{eq:fp1}. On that account we will encounter a contraction-like property, which is key for the subsequent analysis of the convergence rate of the adaptive ILG algorithm in Section~\ref{sec:abstractILG}. Here, in addition, a (uniform) bound of the iterative linearization steps on each discrete space will be shown. In Section~\ref{sec:examples}, we will test our ILG algorithm in the context of finite element discretizations of stationary conservation laws. Finally, we add a few concluding remarks in Section~\ref{sec:conclude}.

\section{Iterative linearization} \label{sec:itlin}

In this first section we will address the convergence of the linearized iteration~\eqref{eq:fp1}. We begin with the following \emph{a posteriori} error estimate.

\begin{lemma} \label{lem:discreteerrorestimate} 
Consider the sequence~$\{u^n\}_{n\ge 0}\subset X$ generated by the iteration~\eqref{eq:fp1}. If $\F$ satisfies~{\rm (F1)--(F2)}, and $a(u;\cdot,\cdot)$, for $u \in X$, fulfils \eqref{eq:coercive}--\eqref{eq:continuity}, then it holds the bound
\begin{equation} \label{eq:discreteerrorestimate} 
 \norm{u^\star-u^{n}}_X \leq \cref{eq:discreteerrorestimate}\norm{u^{n}-u^{n-1}}_X,
 \qquad\text{with}\quad \cref{eq:discreteerrorestimate}:=1+ \nicefrac{\beta}{\Fsm},
\end{equation}
for any~$n\ge 1$.
\end{lemma}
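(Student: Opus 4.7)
The plan is to first rewrite the iteration~\eqref{eq:fp1} so that~$\F(u^{n-1})$ is expressed through the increment~$u^n-u^{n-1}$, then combine this with strong monotonicity of~$\F$ (using $\F(u^\star)=0$) to bound $\|u^\star-u^{n-1}\|_X$ in terms of the increment, and finally conclude by a triangle inequality.

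More concretely, subtracting $\A[u^{n-1}]u^{n-1}$ from both sides of~\eqref{eq:fp1} (written with~$n-1$ in place of~$n$), I obtain the identity
\[
\A[u^{n-1}](u^n-u^{n-1})=-\F(u^{n-1}) \qquad\text{in }X^\star.
\]
Testing the strong monotonicity property~(F2) between~$u^\star$ and~$u^{n-1}$, and using~$\F(u^\star)=0$, gives
\[
\Fsm\,\|u^\star-u^{n-1}\|_X^2 \le \dprod{\F(u^\star)-\F(u^{n-1}),u^\star-u^{n-1}} = -\dprod{\F(u^{n-1}),u^\star-u^{n-1}}.
\]
Substituting the identity above and using the boundedness~\eqref{eq:continuity} of $a(u^{n-1};\cdot,\cdot)$ yields
\[
\Fsm\,\|u^\star-u^{n-1}\|_X^2 = a(u^{n-1};u^n-u^{n-1},u^\star-u^{n-1}) \le \abd\,\|u^n-u^{n-1}\|_X\,\|u^\star-u^{n-1}\|_X,
\]
from which I can divide through by $\|u^\star-u^{n-1}\|_X$ (the case that this is zero is trivial since then $u^{n-1}=u^\star$) to obtain $\|u^\star-u^{n-1}\|_X \le (\abd/\Fsm)\|u^n-u^{n-1}\|_X$.

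To close the argument, I apply the triangle inequality
\[
\|u^\star-u^n\|_X \le \|u^\star-u^{n-1}\|_X + \|u^{n-1}-u^n\|_X \le \left(1+\tfrac{\abd}{\Fsm}\right)\|u^n-u^{n-1}\|_X,
\]
which is exactly the claimed bound~\eqref{eq:discreteerrorestimate}. I do not anticipate any serious obstacle here; the only subtle point is recognizing that the preconditioner~$\A[u^{n-1}]$ appearing in the iteration can be traded against~$\F(u^{n-1})$ so that strong monotonicity of~$\F$ (rather than any property of~$\A[\cdot]$ other than boundedness via~\eqref{eq:continuity}) does the real work.
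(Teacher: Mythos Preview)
Your argument is correct and essentially identical to the paper's proof: both use~(F2) together with~$\F(u^\star)=0$ to bound $\|u^\star-u^{n-1}\|_X$ by $(\beta/\Fsm)\|u^n-u^{n-1}\|_X$ via the boundedness~\eqref{eq:continuity}, and then close with the triangle inequality. The only slip is that the displayed relation $\Fsm\,\|u^\star-u^{n-1}\|_X^2 = a(u^{n-1};u^n-u^{n-1},u^\star-u^{n-1})$ should read~``$\le$'' rather than~``$=$'', since the preceding line already introduced an inequality from~(F2); this is cosmetic and does not affect the argument.
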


\begin{proof}
By invoking (F2), and since $u^\star$ is the (unique) solution of \eqref{eq:F=0}, for~$n\ge 1$, we find that
 \begin{align*}
  \Fsm \norm{u^\star-u^{n-1}}_X^2 &\leq \dprod{\F(u^\star)-\F(u^{n-1}),u^\star-u^{n-1}} = \dprod{\F(u^{n-1}),u^{n-1}-u^\star}.
 \end{align*}
Employing \eqref{eq:f(u)}, \eqref{eq:itweak}, and \eqref{eq:continuity}, we further get
\begin{align*}
 \Fsm \norm{u^\star-u^{n-1}}_X^2 \leq a(u^{n-1};u^{n-1}-u^{n},u^{n-1}-u^\star) \leq \beta \norm{u^{n}-u^{n-1}}_X \norm{u^{n-1}-u^\star}_X,
\end{align*}
and thus
\begin{align*} 
\norm{u^\star-u^{n-1}}_X \leq \beta \Fsm^{-1} \norm{u^{n}-u^{n-1}}_X.
\end{align*}
By the triangle inequality, this leads to 
\begin{align*}
 \norm{u^\star-u^{n}}_X \leq \norm{u^{n}-u^{n-1}}_X+\norm{u^\star-u^{n-1}}_X \leq \cref{eq:discreteerrorestimate}\norm{u^{n}-u^{n-1}}_X, 
\end{align*}
which completes the proof.
\end{proof}

\begin{remark}\label{rem:XN}
We note that the above result equally holds if~\eqref{eq:F=0weak} and~\eqref{eq:fp1} are restricted to any closed subspace of $X$. 
\end{remark}

\subsection{Potentials} 

In addition to~(F1) and (F2), let us make a further assumption on the (nonlinear) operator~$\F$ from~\eqref{eq:F=0}, which will play an essential role in the ensuing analysis.

\begin{itemize}
\item[(F3)]   The operator $\F$ possesses a potential, i.e.~it exists a G\^{a}teaux differentiable functional $\H:X \to \mathbb{R}$ such that $\H'=\F$.
\end{itemize}

We notice the following relation between the norm~$\|\cdot\|_X$ and the potential~$\H$.

\begin{lemma} \label{lemma:energydifference}
 Suppose that the operator $\F$ satisfies {\rm (F1)--(F3)}, and denote by $u^\star\in X$ the unique solution of \eqref{eq:F=0}. Then, we have the estimate
 \begin{equation} \label{eq:energydifference}
  \frac{\Fsm}{2} \norm{u^\star - u}_X^2 \leq \H(u)-\H(u^\star) \leq \frac{\Flc}{2} \norm{u^\star-u}^2_X \qquad \forall u \in X.
 \end{equation}
In particular, $\H$ takes its minimum at $u^\star$. 
\end{lemma}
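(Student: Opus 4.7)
The plan is to prove both bounds via the fundamental theorem of calculus along the straight-line path from $u^\star$ to $u$. Fix an arbitrary $u \in X$, set $v_t := u^\star + t(u - u^\star)$ for $t \in [0,1]$, and define the scalar function $\varphi(t) := \H(v_t)$. By the Gâteaux differentiability of $\H$ with derivative $\F$ assumed in~(F3), the one-sided derivative $\varphi'(t) = \dprod{\F(v_t), u - u^\star}$ exists for every $t \in [0,1]$, and continuity of $\F$ (which follows from~(F1)) makes $\varphi'$ continuous on $[0,1]$. The fundamental theorem of calculus then yields
\begin{equation*}
\H(u) - \H(u^\star) = \int_0^1 \dprod{\F(v_t), u - u^\star} \dt.
\end{equation*}

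To connect the integrand to~(F1)--(F2), I would exploit that $\F(u^\star) = 0$ by~\eqref{eq:F=0} together with $v_t - u^\star = t(u - u^\star)$, and rewrite, for $t \in (0,1]$,
\begin{equation*}
\dprod{\F(v_t), u - u^\star} = \dprod{\F(v_t) - \F(u^\star), u - u^\star} = \frac{1}{t}\dprod{\F(v_t) - \F(u^\star), v_t - u^\star}.
\end{equation*}
The lower bound in~\eqref{eq:energydifference} then follows by applying strong monotonicity~(F2) to the right-hand side, which yields $\Fsm t \|u - u^\star\|_X^2$ as a pointwise lower bound for the integrand, and integrating $\int_0^1 t \dt = \tfrac12$. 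The upper bound is obtained analogously: Lipschitz continuity~(F1) bounds the same integrand in absolute value by $\Flc t \|u - u^\star\|_X^2$, and integration again produces the prefactor $\tfrac12$. The concluding assertion that $\H$ attains its minimum at $u^\star$ is an immediate corollary of the lower bound, since $\Fsm > 0$ forces $\H(u) - \H(u^\star) \geq 0$ for every $u \in X$.

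No real obstacle is anticipated; the argument is a textbook computation of potential energy along straight paths. The only step deserving a sentence of justification in the write-up is the chain-rule identity for $\varphi'$, which rests on Gâteaux (rather than Fréchet) differentiability and therefore requires only that the single-variable limit defining $\varphi'(t)$ coincides with the directional derivative of $\H$ at $v_t$ along $u - u^\star$ — an immediate consequence of~(F3). A minor secondary point is that the rewritten integrand is \emph{a priori} undefined at $t = 0$, but both pointwise bounds $\Fsm t\|u-u^\star\|_X^2$ and $\Flc t\|u-u^\star\|_X^2$ extend continuously to $t = 0$, so integration over $[0,1]$ poses no issue.
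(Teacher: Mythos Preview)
Your proposal is correct and follows essentially the same approach as the paper: define $\varphi(t)=\H(u^\star+t(u-u^\star))$, apply the fundamental theorem of calculus, subtract $\F(u^\star)=0$, rewrite the integrand via the factor $t^{-1}$ to expose the monotonicity/Lipschitz structure, and integrate the resulting bounds. Your added remarks on the chain rule under G\^ateaux differentiability and the removable issue at $t=0$ are reasonable clarifications that the paper omits, but they do not change the argument.
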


\begin{proof}
For fixed $u \in X$, define the real-valued function $\varphi(t):=\H(u^\star+t(u-u^\star))$, for $t \in [0,1]$. Taking the derivative leads to
 \begin{align*}
  \varphi'(t)=\dprod{\H'(u^\star+t(u-u^\star)),u-u^\star}=\dprod{\F(u^\star+t(u-u^\star)),u-u^\star}.
 \end{align*}
By invoking the fundamental theorem of calculus and implementing~\eqref{eq:F=0}, this yields
\begin{align*}
 \H(u)-\H(u^\star)&=\int_0^1 \dprod{\F(u^\star+t(u-u^\star)),u-u^\star} \dt \\
 &= \int_0^1 \dprod{\F(u^\star+t(u-u^\star))-\F(u^\star),u-u^\star} \dt.
\end{align*}
Applying the assumptions (F1) and (F2) we can bound the integrand from above and below, respectively. Indeed, the strong monotonicity (F2) implies that
\begin{align*}
 \H(u)-\H(u^\star)&=\int_0^1 t^{-1}\dprod{\F(u^\star+t(u-u^\star))-\F(u^\star),t(u-u^\star)} \dt 
  \geq \int_0^1 \Fsm t \norm{u^\star-u}_X^2 \dt,
\end{align*}
and therefore,
\begin{equation*}
 \H(u)-\H(u^\star) \geq \frac{\Fsm}{2} \norm{u^\star-u}_X^2.
\end{equation*}
Likewise, by invoking (F1) instead of (F2), we find that
\begin{equation*}
 \H(u)-\H(u^\star) \leq \frac{\Flc}{2} \norm{u^\star-u}_X^2.
\end{equation*}
Combining the above bounds leads to~\eqref{eq:energydifference}.
\end{proof}

\subsection{Contractivity}

In order to state and prove the main result of this section, see Theorem~\ref{thm:it} below, we impose a \emph{monotonicity condition} on the sequence generated by the iterative linearization scheme~\eqref{eq:fp1}.

\begin{itemize}
\item[(F4)] There is a constant $C_\H>0$ such that the sequence defined by \eqref{eq:fp1} fulfils the bound
 \begin{align} \label{eq:Hconstant}
  \H(u^{n-1})-\H(u^{n}) \geq C_\H \norm{u^{n}-u^{n-1}}_X^2 \qquad \forall n \geq 1,
 \end{align}
where $\H$ is the potential of $\F$ introduced in (F3).
\end{itemize}

\begin{proposition} \label{prop:contractionlike}
Suppose that {\rm (F1)--(F4)} are satisfied. Furthermore, let the bilinear form $a(\cdot;\cdot,\cdot)$ from~\eqref{eq:itweak} be coercive and bounded, cf.~\eqref{eq:coercive} and~\eqref{eq:continuity}, respectively. Then, for any $k\ge 1$, the sequence~$\{u^n\}_{n\ge 0}$ from~\eqref{eq:fp1} satisfies the estimate
\begin{equation}\label{eq:sumbound0}
\sum_{j=k+1}^{\infty} \norm{u^{j}-u^{j-1}}_X^2 \leq \cref{eq:sumbound} \norm{u^{k}-u^{k-1}}_X^2, 
\end{equation}
 with
 \begin{equation}\label{eq:sumbound}
 \cref{eq:sumbound}:=\frac{\Flc\cref{eq:discreteerrorestimate}^2}{2C_\H}.
 \end{equation}
Moreover, the contraction-like property
\begin{align} \label{eq:ndiff1diff}
\norm{u^{n}-u^{n-1}}_X^2 \leq \cref{eq:sumbound}\left(1+\cref{eq:sumbound}^{-1}\right)^{2-n} \norm{u^{1}-u^{0}}_X^2
\end{align}
holds true for any~$n\ge 2$.
\end{proposition}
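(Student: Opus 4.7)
The strategy is to combine the monotone energy decay (F4) with the a posteriori estimate of Lemma~\ref{lem:discreteerrorestimate} and the quadratic potential bound of Lemma~\ref{lemma:energydifference}. The first part of the proposition is a telescoping argument, and the second part will follow from a recursion for the tail sums.

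First, I would sum the bound from (F4) for $n$ running from $k+1$ to some finite $N>k$. Telescoping the left-hand side yields
\[
C_\H \sum_{n=k+1}^{N}\norm{u^n-u^{n-1}}_X^2 \le \H(u^k)-\H(u^N).
\]
Since Lemma~\ref{lemma:energydifference} gives $\H(u^N)\ge \H(u^\star)$ and $\H(u^k)-\H(u^\star)\le \tfrac{\Flc}{2}\norm{u^\star-u^k}_X^2$, while Lemma~\ref{lem:discreteerrorestimate} bounds $\norm{u^\star-u^k}_X\le \cref{eq:discreteerrorestimate}\norm{u^k-u^{k-1}}_X$, we obtain
\[
C_\H \sum_{n=k+1}^{N}\norm{u^n-u^{n-1}}_X^2 \le \frac{\Flc\cref{eq:discreteerrorestimate}^2}{2}\norm{u^k-u^{k-1}}_X^2.
\]
Dividing by $C_\H$ and letting $N\to\infty$ proves~\eqref{eq:sumbound0}.

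For the contraction-like estimate~\eqref{eq:ndiff1diff}, I would introduce the tail sums
\[
S_k:=\sum_{j=k}^{\infty}\norm{u^j-u^{j-1}}_X^2, \qquad k\ge 1,
\]
which are finite thanks to~\eqref{eq:sumbound0}. The bound~\eqref{eq:sumbound0} reads $S_{k+1}\le \cref{eq:sumbound}\norm{u^k-u^{k-1}}_X^2=\cref{eq:sumbound}(S_k-S_{k+1})$ for $k\ge 1$, which rearranges to the geometric recursion
\[
S_{k+1}\le \frac{\cref{eq:sumbound}}{1+\cref{eq:sumbound}}\,S_k=\bigl(1+\cref{eq:sumbound}^{-1}\bigr)^{-1} S_k.
\]
Iterating this inequality starting from $S_2\le \cref{eq:sumbound}\norm{u^1-u^0}_X^2$ (the case $k=1$ of~\eqref{eq:sumbound0}) yields
\[
S_n\le \cref{eq:sumbound}\bigl(1+\cref{eq:sumbound}^{-1}\bigr)^{2-n}\norm{u^1-u^0}_X^2,\qquad n\ge 2.
\]
Finally, since $\norm{u^n-u^{n-1}}_X^2\le S_n$, the bound~\eqref{eq:ndiff1diff} follows.

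The main obstacle is conceptual rather than computational: one has to recognise that the naive bound $\norm{u^{n}-u^{n-1}}_X^2\le \cref{eq:sumbound}\norm{u^{n-1}-u^{n-2}}_X^2$ extracted directly from~\eqref{eq:sumbound0} gives the contraction factor $\cref{eq:sumbound}$ (which exceeds $1$ in general and is therefore useless), and instead route the argument through the tail sum $S_k$, whose ratio $S_{k+1}/S_k$ is genuinely bounded away from~$1$ by $\cref{eq:sumbound}/(1+\cref{eq:sumbound})<1$. The rest of the proof is a direct chaining of Lemmas~\ref{lem:discreteerrorestimate} and~\ref{lemma:energydifference} with assumption~(F4).
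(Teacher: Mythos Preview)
Your proof is correct and essentially identical to the paper's: the first part matches the paper's telescoping argument verbatim, and for the second part the paper isolates your tail-sum recursion $S_{k+1}\le(1+\cref{eq:sumbound}^{-1})^{-1}S_k$ as a separate auxiliary lemma (Lemma~\ref{lem:seq}, with $c=\cref{eq:sumbound}^{-1}$ and $b_k=S_k$), but the underlying computation is the same.
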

 
Before turning to the proof of the above proposition, we establish an auxiliary result.
 
\begin{lemma}\label{lem:seq}
Consider a sequence~$\{a_j\}_{j=1}^\infty\subset[0,\infty)$ which satisfies the estimate
\begin{equation}\label{eq:seq}
c\sum_{j=k+1}^\infty a_j\le a_k\qquad\forall k\ge 1,
\end{equation}
for some constant~$c>0$. Then, it holds the bound $a_j\le c^{-1}(1+c)^{2-j}a_1$,
for any~$j\ge 2$.
\end{lemma}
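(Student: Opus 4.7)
The plan is to convert the tail-sum hypothesis into a geometric decay statement for the tails themselves, and then read off the bound on $a_j$ from a single telescoping identity.

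First I would introduce the tail sums $S_k := \sum_{j=k+1}^\infty a_j$ for $k \ge 0$, which are finite since the hypothesis \eqref{eq:seq} already forces convergence of the series. The basic identity is the telescoping relation $a_k = S_{k-1} - S_k$ for every $k \ge 1$. Substituting this into \eqref{eq:seq} rewrites the assumption as
\begin{equation*}
c\,S_k \le S_{k-1} - S_k, \qquad \text{i.e.,}\qquad (1+c)\,S_k \le S_{k-1}, \qquad k \ge 1.
\end{equation*}

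Next I would iterate this one-step contraction. Applying it $k-1$ times gives
\begin{equation*}
S_k \le (1+c)^{-(k-1)} S_1 \qquad \text{for all } k \ge 1.
\end{equation*}
The starting value $S_1$ is controlled by specializing \eqref{eq:seq} at $k=1$, which yields $S_1 \le c^{-1} a_1$. Combining the two estimates produces
\begin{equation*}
S_k \le c^{-1}(1+c)^{-(k-1)} a_1, \qquad k \ge 1.
\end{equation*}

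Finally, since $a_j = S_{j-1} - S_j \le S_{j-1}$, applying the previous bound with $k = j-1 \ge 1$ (so $j \ge 2$) gives exactly
\begin{equation*}
a_j \le S_{j-1} \le c^{-1}(1+c)^{-(j-2)} a_1 = c^{-1}(1+c)^{2-j} a_1,
\end{equation*}
which is the claim. There is no real obstacle here: the only conceptual step is recognizing that \eqref{eq:seq} is equivalent to a geometric contraction on the tail sums, after which the result is a routine two-line estimate. One small bookkeeping point to be careful about is that the recursion on $S_k$ requires $k \ge 1$, which is precisely why the conclusion starts at $j \ge 2$ rather than $j \ge 1$.
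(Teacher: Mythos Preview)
Your proof is correct and follows essentially the same approach as the paper: the paper defines $b_k:=\sum_{j=k}^\infty a_j$ (so $b_k=S_{k-1}$ in your notation), derives the same one-step contraction $b_k\ge(c+1)b_{k+1}$, iterates it, and then uses $a_1\ge c b_2$ together with $a_k\le b_k$ to conclude. The only difference is the cosmetic index shift in the definition of the tail sum.
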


\begin{proof}
Let us define the sequence $b_k:=\sum_{j=k}^\infty a_j$, $k\ge 1$.
Using~\eqref{eq:seq}, we note that 
\[
b_{k}
=a_k+\sum_{j=k+1}^\infty a_j
=a_{k}+b_{k+1}\ge (c+1)b_{k+1},
\]
for all~$k\ge1$. By induction, this implies that~$b_2\ge(c+1)^{k-2}b_{k}$ for any~$k\ge 2$. Therefore, we infer that
\[
a_1\ge cb_2\ge c(c+1)^{k-2}b_{k}\ge c(c+1)^{k-2}a_k\qquad\forall k\ge 2.
\]
Rearranging terms completes the proof.
\end{proof}
 
\begin{proof}[Proof of Proposition~\ref{prop:contractionlike}]
Let $n>k\ge 1$ be arbitrary. Then, we note the telescope sum
\begin{align*}
 \H(u^k)-\H(u^n)= \sum_{j=k}^{n-1}\left(\H(u^j)-\H(u^{j+1})\right).
\end{align*}
Thus, by virtue of~\eqref{eq:Hconstant}, we infer that
\begin{align} \label{eq:sumhkn}
\H(u^{k})-\H(u^n) \geq C_\H \sum_{j=k}^{n-1} \norm{u^{j+1}-u^{j}}_X^2.
\end{align}
We aim to bound the left-hand side. To this end, we employ Lemma~\ref{lemma:energydifference}, which implies that
\begin{align*}
\H(u^{k})-\H(u^n) \leq \H(u^{k})-\H(u^\star) \leq \frac{\Flc}{2} \norm{u^\star-u^{k}}_X^2.
\end{align*}
This, together with Lemma~\ref{lem:discreteerrorestimate}, leads to
\begin{align} \label{eq:hkn}
 \H(u^{k})-\H(u^n) \leq \frac{\Flc}{2} \cref{eq:discreteerrorestimate}^2 \norm{u^{k}-u^{k-1}}_X^2.
\end{align}
Combining \eqref{eq:sumhkn} and \eqref{eq:hkn} yields
\[
 \sum_{j=k}^{n-1} \norm{u^{j+1}-u^{j}}_X^2 \leq \cref{eq:sumbound}\norm{u^{k}-u^{k-1}}_X^2.
\]
Letting~$n\to\infty$, we obtain~\eqref{eq:sumbound0}.
Moreover, upon setting~$c:=\cref{eq:sumbound}^{-1}$ and~$a_j:=\|u^{j}-u^{j-1}\|^2_X$, $j\ge 1$, the bound~\eqref{eq:sumbound0} takes the form~\eqref{eq:seq}. Hence, applying Lemma~\ref{lem:seq}, we deduce~\eqref{eq:ndiff1diff}.
\end{proof}

From~\eqref{eq:ndiff1diff} we immediately obtain the following result.

\begin{corollary}\label{cor:null}
Under the assumptions of Proposition~\ref{prop:contractionlike}, it follows that $\norm{u^{n}-u^{n-1}}_X$ is a null sequence as~$n\to\infty$.
\end{corollary}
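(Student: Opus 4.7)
The plan is straightforward: the statement is an immediate consequence of the contraction-like estimate \eqref{eq:ndiff1diff} already proved in Proposition~\ref{prop:contractionlike}. Since $\cref{eq:sumbound}>0$ (being a ratio of positive constants as defined in \eqref{eq:sumbound}), the quantity $q:=(1+\cref{eq:sumbound}^{-1})^{-1}$ lies strictly in the open interval $(0,1)$. Consequently, the right-hand side of \eqref{eq:ndiff1diff} can be rewritten as $\cref{eq:sumbound}\,q^{\,n-2}\|u^{1}-u^{0}\|_X^2$, and geometric decay in $n$ forces $\|u^{n}-u^{n-1}\|_X\to 0$ as $n\to\infty$. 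No additional machinery beyond Proposition~\ref{prop:contractionlike} is needed.

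As a sanity check, one can arrive at the same conclusion from the summability assertion \eqref{eq:sumbound0} alone: since the tail sum $\sum_{j=k+1}^{\infty}\|u^{j}-u^{j-1}\|_X^2$ is bounded by $\cref{eq:sumbound}\|u^{k}-u^{k-1}\|_X^2$ and hence finite for, say, $k=1$, the series $\sum_{j\ge 1}\|u^{j}-u^{j-1}\|_X^2$ converges, and its general term must tend to zero. This alternative route avoids the explicit geometric rate but yields the same qualitative statement.

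In short, there is no genuine obstacle; the corollary is simply a convenient repackaging of~\eqref{eq:ndiff1diff}. The only thing worth being explicit about in the write-up is that $1+\cref{eq:sumbound}^{-1}>1$, so that the exponent $2-n\to-\infty$ produces a null sequence.
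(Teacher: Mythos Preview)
Your proposal is correct and matches the paper's approach exactly: the paper simply states that the corollary follows immediately from~\eqref{eq:ndiff1diff}, and your write-up spells out precisely why (namely, $1+\cref{eq:sumbound}^{-1}>1$ gives geometric decay). The alternative route via the summability bound~\eqref{eq:sumbound0} is also valid, though not mentioned in the paper.
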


\subsection{Convergence}

We are now ready to state and prove the main result of this section.

\begin{theorem}\label{thm:it}
Suppose that {\rm (F1)--(F4)} as well as \eqref{eq:coercive} and \eqref{eq:continuity} hold true. Then, the sequence $\{u^n\}_{n \geq 0}$ obtained from the iterative linearization procedure~\eqref{eq:fp1} converges to the unique solution $u^\star \in X$ of \eqref{eq:F=0}.
\end{theorem}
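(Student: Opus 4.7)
The plan is straightforward: essentially all of the work has already been done in the preceding results, and the theorem is obtained by combining Lemma~\ref{lem:discreteerrorestimate} with the null-sequence property of $\{\|u^n-u^{n-1}\|_X\}$ given by Corollary~\ref{cor:null} (itself a consequence of Proposition~\ref{prop:contractionlike}).

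More concretely, I would first invoke Lemma~\ref{lem:discreteerrorestimate}, which, under the assumed conditions (F1), (F2), and the coercivity/boundedness of $a(u;\cdot,\cdot)$, yields the a~posteriori bound
\begin{equation*}
\norm{u^\star - u^n}_X \le \cref{eq:discreteerrorestimate}\,\norm{u^n - u^{n-1}}_X, \qquad n \ge 1,
\end{equation*}
where $\cref{eq:discreteerrorestimate} = 1 + \beta/\Fsm$ is a constant independent of $n$. Second, I would appeal to Corollary~\ref{cor:null} (or, equivalently, directly to the contraction-like estimate~\eqref{eq:ndiff1diff} in Proposition~\ref{prop:contractionlike}, which uses the additional assumptions (F3) and (F4)), giving $\norm{u^n - u^{n-1}}_X \to 0$ as $n \to \infty$. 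Passing to the limit $n \to \infty$ in the displayed inequality then yields $\norm{u^\star - u^n}_X \to 0$, i.e., $u^n \to u^\star$ in $X$, which is exactly the desired convergence.

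I do not anticipate any real obstacle here: Lemma~\ref{lem:discreteerrorestimate} converts the question of convergence to $u^\star$ into the question of whether the consecutive-iterate differences form a null sequence, and Proposition~\ref{prop:contractionlike} establishes the latter (in fact with a geometric-type rate controlled by $\cref{eq:sumbound}$). If a quantitative statement were desired, one could combine Lemma~\ref{lem:discreteerrorestimate} with~\eqref{eq:ndiff1diff} to obtain an explicit bound of the form $\norm{u^\star - u^n}_X \le C\,(1+\cref{eq:sumbound}^{-1})^{(2-n)/2}\,\norm{u^1-u^0}_X$ for $n\ge 2$, but for the stated qualitative convergence result the two-line argument above suffices.
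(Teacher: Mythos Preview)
Your proposal is correct and matches the paper's own proof essentially verbatim: the paper simply states that combining Lemma~\ref{lem:discreteerrorestimate} with Corollary~\ref{cor:null} directly yields the convergence. Your additional remark about extracting a quantitative rate is a natural elaboration but not part of the paper's argument.
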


\begin{proof}
Combining Lemma~\ref{lem:discreteerrorestimate} and Corollary~\ref{cor:null} directly implies the convergence of the linearized iteration scheme~\eqref{eq:fp1}.
\end{proof}

\begin{remark}
In the proof of Theorem~\ref{thm:it} the application of Lemma~\ref{lem:discreteerrorestimate} can be replaced by using ~\cite[Proposition~2.1]{HeidWihler:18} instead. We note that the latter result does not require property~(F1) to hold. Indeed, assume that~{\rm (F2)}, \eqref{eq:coercive} and~\eqref{eq:continuity} are satisfied, and that $u \mapsto a(u;u,\cdot)$ and $u \mapsto f(u)$ are continuous mappings from $X$ into its dual space~$X^\star$ with respect to the weak topology on $X^\star$. Then, if the sequence $\{u^{n}\}_{n \geq 0}$ defined by \eqref{eq:fp1} satisfies~$\|u^{n}-u^{n-1}\|_X\to0$ as~$n\to\infty$, it converges to the unique solution $u^\star\in X$ of~\eqref{eq:F=0}.
\end{remark}

\subsection{Some remarks on condition~(F4)}

Suppose that the assumptions (F1)--(F3) are satisfied, and consider the sequence $\{u^n\}_{n \geq 0}$ generated by the iteration~\eqref{eq:fp1}. Analogously as in the proof of Lemma~\ref{lemma:energydifference}, for fixed $n \geq 1$, we define the real-valued function $\varphi(t):=\H(u^{n-1}+t(u^{n}-u^{n-1}))$, for $t \in [0,1]$. Then, it holds the identity
\begin{align*}
 \H(u^{n-1})-\H(u^{n})&=  - \int_0^1 \dprod{\F(u^{n-1}+t(u^{n}-u^{n-1}))-\F(u^{n-1}),u^{n}-u^{n-1}} \dt\\
  & \quad - \dprod{\F(u^{n-1}),u^{n}-u^{n-1}}.
\end{align*}
Using~\eqref{eq:f(u)} and~\eqref{eq:itweak}, we note that
\begin{align*}
- \dprod{\F(u^{n-1}),u^{n}-u^{n-1}}
&= a(u^{n-1};u^{n}-u^{n-1},u^{n}-u^{n-1}).
\end{align*}
Hence,
\begin{align*}
 \H(u^{n-1})-\H(u^{n})
 &=
 - \int_0^1 \dprod{\F(u^{n-1}+t(u^{n}-u^{n-1}))-\F(u^{n-1}),u^{n}-u^{n-1}} \dt\\
&\quad +a(u^{n-1};u^{n}-u^{n-1},u^{n}-u^{n-1}).
\end{align*}
Consequently, if the bilinear form $a(u;\cdot,\cdot)$, for any given $u \in X$, is uniformly coercive with constant $\alpha > \nicefrac{\Flc}{2}$, cf.~\eqref{eq:coercive}, where $\Flc$ refers to the Lipschitz constant occurring in~(F1), then we obtain that
\begin{align*}
\H(u^{n-1})-\H(u^{n})
& \geq \alpha \norm{u^{n}-u^{n-1}}_X^2 - \int_0^1 t \Flc \norm{u^{n}-u^{n-1}}^2_X \dt 
= \left(\alpha-\frac{\Flc}{2}\right) \norm{u^{n}-u^{n-1}}_X^2, 
\end{align*}
i.e.~\eqref{eq:Hconstant} is satisfied with $C_\H=\alpha - \nicefrac{\Flc}{2}>0$. 
  
 \begin{proposition} \label{prop:f4}
  If $\F$ satisfies {\rm (F1)--(F3)}, and the bilinear form $a(\cdot;\cdot,\cdot)$ from the unified iteration scheme \eqref{eq:fp1} is coercive with coercivity constant~$\alpha > \nicefrac{\Flc}{2}$, cf.~\eqref{eq:coercive}, then {\rm (F4)} holds true.
\end{proposition}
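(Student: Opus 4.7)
The plan is to express $\H(u^{n-1})-\H(u^n)$ as a sum of two pieces: one that coercivity of $a$ bounds below by (something like) $\alpha\|u^{n}-u^{n-1}\|_X^2$, and another which is a Lipschitz remainder controlled above by $(\Flc/2)\|u^n-u^{n-1}\|_X^2$. The gap between these two quantities is then exactly the constant $C_\H$ demanded by~(F4).

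To execute this, I would set $\varphi(t):=\H(u^{n-1}+t(u^n-u^{n-1}))$ on $[0,1]$. By~(F3), $\varphi'(t)=\dprod{\F(u^{n-1}+t(u^n-u^{n-1})),u^n-u^{n-1}}$, so the fundamental theorem of calculus gives
\begin{align*}
\H(u^{n-1})-\H(u^n) &= -\dprod{\F(u^{n-1}),u^n-u^{n-1}} \\
&\quad - \int_0^1 \dprod{\F(u^{n-1}+t(u^n-u^{n-1}))-\F(u^{n-1}),u^n-u^{n-1}}\dt,
\end{align*}
after adding and subtracting $\F(u^{n-1})$ inside the integrand.

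For the first term, I would use the definition of $f$ in~\eqref{eq:f(u)} together with~\eqref{eq:itweak} tested against $w=u^n-u^{n-1}$. This gives $\dprod{\F(u^{n-1}),u^n-u^{n-1}}=a(u^{n-1};u^{n-1}-u^n,u^n-u^{n-1})$, and bilinearity in the second argument produces the diagonal form $-\dprod{\F(u^{n-1}),u^n-u^{n-1}}=a(u^{n-1};u^n-u^{n-1},u^n-u^{n-1})$, which by~\eqref{eq:coercive} is bounded below by $\alpha\|u^n-u^{n-1}\|_X^2$.

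For the integral, (F1) gives the pointwise estimate $|\dprod{\F(u^{n-1}+t(u^n-u^{n-1}))-\F(u^{n-1}),u^n-u^{n-1}}|\le t\Flc\|u^n-u^{n-1}\|_X^2$, hence the integral term contributes at most $(\Flc/2)\|u^n-u^{n-1}\|_X^2$. Combining the two bounds yields~\eqref{eq:Hconstant} with $C_\H=\alpha-\Flc/2$, which is strictly positive by the assumption $\alpha>\Flc/2$. There is no serious obstacle: the entire argument is essentially a bookkeeping of the identity for $\H(u^{n-1})-\H(u^n)$ derived in the paragraph preceding the proposition; the only nontrivial move is recognizing that~\eqref{eq:itweak} lets one trade the awkward term $-\dprod{\F(u^{n-1}),\cdot}$ for the coercive diagonal form of~$a(u^{n-1};\cdot,\cdot)$.
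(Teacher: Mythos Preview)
Your proposal is correct and follows essentially the same approach as the paper: the identity for $\H(u^{n-1})-\H(u^n)$ via the fundamental theorem of calculus, the trade of $-\dprod{\F(u^{n-1}),u^n-u^{n-1}}$ for the diagonal form $a(u^{n-1};u^n-u^{n-1},u^n-u^{n-1})$ using~\eqref{eq:itweak}, and the Lipschitz bound on the integral remainder are exactly the steps carried out in the paragraph preceding Proposition~\ref{prop:f4}, yielding the same constant $C_\H=\alpha-\nicefrac{\Flc}{2}$.
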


\begin{remark}
For the Zarantonello iteration scheme~\eqref{eq:zarantonelloit} we note that $a(u;v,w)=\delta^{-1}(v,w)_{X}$, for $u,v,w \in X$, in~\eqref{eq:Aweak}. Then, we have that
\begin{align*}
 a(u;v,v)=\frac{1}{\delta}\norm{v}_X^2 \qquad \forall u,v \in X,
\end{align*}
which, upon using Proposition~\ref{prop:f4}, shows that~(F4) is satisfied for any $\delta \in (0,\nicefrac{2}{\Flc})$. Under suitable assumptions, a similar observation can be made for the Newton method~\eqref{eq:newtonstrong} provided that the damping parameter~$\delta(u^n)$ is chosen sufficiently small; cf.~\cite[Theorem~2.6]{HeidWihler:18}.
\end{remark}

\begin{remark} 
The above Proposition~\ref{prop:f4} delivers a sufficient condition for (F4). We note, however, that it is not necessary. In particular, if the coercivity constant~$\alpha$ in~\eqref{eq:coercive} is much smaller than the Lipschitz constant $\Flc$ from~(F1), then the bound on~$\alpha$ in Proposition~\ref{prop:f4} is violated. Nonetheless, in that case, we can still satisfy~\eqref{eq:Hconstant} by imposing alternative assumptions; cf., e.g.,~(K2) in \cite{HeidWihler:18}. 
\end{remark}

\section{Adaptive ILG Discretizations} \label{sec:abstractILG} 

In this section, following the recent approach~\cite{GantnerHaberlPraetoriusStiftner:17}, we will present an adaptive ILG algorithm that exploits an interplay of the unified iterative linearization procedure~\eqref{eq:fp1} and abstract adaptive Galerkin discretizations thereof, cf.~\eqref{eq:lindisproblem}. Moreover, we will establish the (linear) convergence of the resulting sequence of approximations to the unique solution of \eqref{eq:F=0}, and comment on the uniform boundedness of the iterative linearization steps on each discrete space. We proceed along the ideas of~\cite[\S4 and \S5]{GantnerHaberlPraetoriusStiftner:17}, and generalize those results to the abstract framework considered in the current paper. Throughout this section, we will assume that any iterative linearization is of the form~\eqref{eq:itweak}, with \eqref{eq:coercive} and \eqref{eq:continuity} being satisfied.

\subsection{Abstract error estimators}

We generalize the assumptions on the finite element refinement indicator from~\cite[\S4]{GantnerHaberlPraetoriusStiftner:17}. Let us consider a sequence of hierarchical finite dimensional Galerkin subspaces $\{X_N\}_{N\ge 0}\subset X$, i.e.
\begin{equation}\label{eq:nested}
X_0\subset X_1\subset X_2\subset \cdots\subset X.
\end{equation}
Suppose that, for any $N\ge0$, there is a computable \emph{error estimator} 
\begin{equation}\label{eq:eta}
\eta_N:\,X_N\to[0,\infty),
\end{equation}
which satisfies the following two properties: 
\begin{enumerate}
\item[(A1)] For all $u,v \in X_N$ it holds that
\begin{equation} \label{eq:a1}
|\eta_N(u)-\eta_N(v)| \leq \cref{eq:a1} \norm{u-v}_X.
\end{equation}
\item[(A2)] The error of the discrete solution~$\dissol{N}\in X_N$ from~\eqref{eq:discreteproblem} is controlled by the \emph{a posteriori} error bound
\begin{equation} \label{eq:a2}
\norm{u^\star-\dissol{N}}_X \leq \cref{eq:a2} \eta_N(\dissol{N}),
\end{equation}
where~$u^\star\in X$ is the exact solution of~\eqref{eq:F=0}.
\end{enumerate}
Here, $\cref{eq:a1}, \cref{eq:a2}\ge1$ are two constants. 


The following result shows that the two estimators for $\un{n}$ and $\dissol{N}$ are equivalent once the linearization error is small enough.

\begin{lemma} \label{lem:equivalentestimators} 
Suppose that $\F$ satisfies {\rm (F1)--(F2)}, and that the \emph{a posteriori} estimator fulfils {\rm (A1)}. Furthermore, for some~$n\ge 1$, assume that
\begin{equation}\label{eq:linerr}
\norm{\un{n}-\un{n-1}}_X \leq \lambda \eta_N(\un{n}),
\end{equation}
with a constant~$\lambda\in(0, \cref{eq:clambda}^{-1})$, where
\begin{align} \label{eq:clambda}
\cref{eq:clambda}:=\cref{eq:discreteerrorestimate}\cref{eq:a1}.                                                                                                                                                                                                    \end{align}
Then, we have that
\begin{equation}\label{eq:unusbound}
\norm{\dissol{N}-\un{n}}_X \leq \lambda\cref{eq:discreteerrorestimate}\min\left\{\eta_N(\un{n}),(1-\lambda \cref{eq:clambda})^{-1} \eta_N(\dissol{N})\right\}.
\end{equation}
Moreover, the two error estimators~$\eta_N(\un{n})$ and~$\eta_N(\dissol{N})$ are equivalent in the sense that
\begin{equation}\label{eq:eq}
 (1-\lambda \cref{eq:clambda}) \eta_N(\un{n}) \leq \eta_N(\dissol{N}) \leq (1+\lambda \cref{eq:clambda}) \eta_N(\un{n}).
\end{equation}
\end{lemma}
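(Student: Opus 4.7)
The plan is to combine the discrete-level analogue of Lemma~\ref{lem:discreteerrorestimate} with the Lipschitz stability (A1) of the estimator. Since $\un{n}$ arises from the iteration~\eqref{eq:lindisproblem} restricted to the closed subspace $X_N$ and $\dissol{N}$ is the unique solution of~\eqref{eq:discreteproblem} in $X_N$, Remark~\ref{rem:XN} lets me apply Lemma~\ref{lem:discreteerrorestimate} with the roles of $u^\star$ and $u^n$ played by $\dissol{N}$ and $\un{n}$, respectively. Together with the hypothesis~\eqref{eq:linerr}, this yields
\begin{equation*}
\norm{\dissol{N}-\un{n}}_X \leq \cref{eq:discreteerrorestimate}\norm{\un{n}-\un{n-1}}_X \leq \lambda\cref{eq:discreteerrorestimate}\,\eta_N(\un{n}),
\end{equation*}
which already establishes the bound in~\eqref{eq:unusbound} involving $\eta_N(\un{n})$.

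Next, I would exploit (A1) on the pair $\un{n}, \dissol{N} \in X_N$: the preceding estimate gives
\begin{equation*}
|\eta_N(\un{n})-\eta_N(\dissol{N})| \leq \cref{eq:a1}\norm{\dissol{N}-\un{n}}_X \leq \lambda\cref{eq:a1}\cref{eq:discreteerrorestimate}\,\eta_N(\un{n}) = \lambda\cref{eq:clambda}\,\eta_N(\un{n}).
\end{equation*}
Resolving the absolute value yields both sides of the equivalence~\eqref{eq:eq}. The assumption $\lambda<\cref{eq:clambda}^{-1}$ ensures $1-\lambda\cref{eq:clambda}>0$, so the left inequality of~\eqref{eq:eq} can be inverted to give $\eta_N(\un{n}) \leq (1-\lambda\cref{eq:clambda})^{-1}\eta_N(\dissol{N})$.

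Feeding this back into the first display produces
\begin{equation*}
\norm{\dissol{N}-\un{n}}_X \leq \lambda\cref{eq:discreteerrorestimate}(1-\lambda\cref{eq:clambda})^{-1}\eta_N(\dissol{N}),
\end{equation*}
which is the remaining half of~\eqref{eq:unusbound}. Taking the minimum of the two upper bounds completes the proof. There is no serious obstacle here; the only subtlety is to remember that Lemma~\ref{lem:discreteerrorestimate} is being used inside $X_N$ (via Remark~\ref{rem:XN}), comparing $\un{n}$ with the \emph{discrete} exact solution $\dissol{N}$ rather than with the continuous $u^\star$, and to use the strict inequality $\lambda\cref{eq:clambda}<1$ precisely at the point where $(1-\lambda\cref{eq:clambda})$ is inverted.
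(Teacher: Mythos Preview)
Your proof is correct and follows essentially the same approach as the paper: both combine Lemma~\ref{lem:discreteerrorestimate} (in the discrete space $X_N$, via Remark~\ref{rem:XN}) with the Lipschitz property~(A1) of the estimator. The only difference is organizational---you derive both sides of~\eqref{eq:eq} simultaneously from a single absolute-value inequality and then feed the lower bound back to obtain the second half of~\eqref{eq:unusbound}, whereas the paper first establishes both bounds on $\norm{\dissol{N}-\un{n}}_X$ and then uses each one separately to obtain one inequality in~\eqref{eq:eq}; the ingredients are identical.
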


\begin{proof}
Owing to Lemma~\ref{lem:discreteerrorestimate} and Remark~\ref{rem:XN}, and due to~\eqref{eq:linerr}, it holds that
\begin{align} \label{eq:firstinequality}
 \norm{\dissol{N}-\un{n}}_X \leq \cref{eq:discreteerrorestimate} \norm{\un{n}-\un{n-1}}_X \leq \lambda\cref{eq:discreteerrorestimate} \eta_N(\un{n}).
\end{align}
Invoking the Lipschitz continuity (A1), we obtain
\begin{align*}
 \norm{\dissol{N}-\un{n}}_X \leq \lambda\cref{eq:discreteerrorestimate}\left(\eta_N(\dissol{N})+\cref{eq:a1}\norm{\dissol{N}-\un{n}}_X\right).
\end{align*}
Since $\lambda < \cref{eq:clambda}^{-1}$ we have that $\lambda\cref{eq:discreteerrorestimate}\cref{eq:a1} =\lambda \cref{eq:clambda}<1$. Hence, manipulating the above inequality yields
\begin{align} \label{eq:secondinequality}
\norm{\dissol{N}-\un{n}}_X \leq \frac{\lambda\cref{eq:discreteerrorestimate}}{1-\lambda \cref{eq:clambda}} \eta_N(\dissol{N}). 
\end{align}
Combining the two inequalities \eqref{eq:firstinequality} and \eqref{eq:secondinequality} gives the bound \eqref{eq:unusbound}.  Moreover,
applying (A1), as before, and using~\eqref{eq:firstinequality}, we infer that
\[
 \eta_N(\dissol{N}) 
 \leq \eta_N(\un{n})+ \cref{eq:a1} \norm{\dissol{N}-\un{n}}_X 
 \leq (1+\lambda \cref{eq:clambda}) \eta_N(\un{n}).
\]
Similarly, employing~\eqref{eq:secondinequality}, it follows that
\begin{align*}
 \eta_N(\un{n}) 
 \leq \eta_N(\dissol{N})+ \cref{eq:a1} \norm{\dissol{N}-\un{n}}_X 
 \leq \left(1+ \frac{\lambda \cref{eq:clambda}}{1-\lambda \cref{eq:clambda}}\right)\eta_N(\un{\star})=\frac{1}{1-\lambda \cref{eq:clambda}} \eta_N(\dissol{N}).
\end{align*}
This completes the argument.
\end{proof}

\subsection{Adaptive ILG algorithm}
We focus on the adaptive algorithm from~\cite{GantnerHaberlPraetoriusStiftner:17}, which was studied in the context of finite element discretizations of the Zarantonello iteration~\eqref{eq:zarantonelloit}. It is closely related to the general adaptive ILG scheme in~\cite{HeidWihler:18}. The key idea is the same in both algorithms: On a given Galerkin space, we iterate the linearization scheme~\eqref{eq:lindisproblem} as long as the linearization error dominates. Once the ratio of the linearization error and the \emph{a posteriori} error bound is sufficiently small, we enrich the Galerkin space in a suitable way. 

\begin{algorithm}
\caption{Adaptive ILG algorithm}
\label{alg:praetal}
\begin{algorithmic}[1]
\State Prescribe a tolerance~$\tol>0$, and an adaptivity parameter $\lambda >0$. Moreover, set~$N:=0$ and~$n:=0$. Start with an initial Galerkin space $X_0\subset X$, and an arbitrary initial guess $u_0^0 \in X_0$.
\Repeat
\State Set $\Xi^0_N:=1$ and $\Upsilon^0_N:=0$.
\While {$\Xi^n_N > \lambda \Upsilon^n_N$}
\State Perform a single iterative linearization step~\eqref{eq:lindisproblem} to obtain $\un{n+1}$ from $\un{n}$.
\State Update $n \gets n+1$.
\State Set~$\Xi^n_N:=\|\un{n}-\un{n-1}\|_X$, and compute the error estimator 
$\Upsilon^{n}_N:=\eta_N(\un{n})$ from~\eqref{eq:eta}.
\EndWhile
\State \multiline{Let $u_N:=u_N^n \in X_N$, and enrich the Galerkin space $X_N$ appropriately based on the error estimator $\eta_N(u_N)$ in order to obtain $X_{N+1}$.}
\State Define $u_{N+1}^0 := u_N$ by inclusion $X_{N+1} \hookleftarrow X_N$.
\State Update $N\gets N+1$, and set~$n:=0$.
\Until {$\eta_N(u_N^n) < \tol$.}\\
\Return the sequence of discrete solutions $u_N \in X_N$. 
\end{algorithmic}
\end{algorithm}

\begin{remark} \label{rem:specialcases}
We emphasize that we do not know (a priori) if the while loop of Algorithm~\ref{alg:praetal} always terminates after finitely many steps. Moreover, it may happen that $\eta_N(u_N)=0$, for some $N \geq 0$, i.e.~the algorithm terminates. Let us provide two comments on this issue:
\begin{enumerate}[(a)]
\item Suppose that there is an enrichment $X_N$ of $X_0$ generated by the above Algorithm~\ref{alg:praetal} such that $\Xi^n_N > \lambda \Upsilon^n_N$ for all $n \geq 0$; in this situation, the while loop will never end. Given the assumptions of Proposition~\ref{prop:contractionlike}, it follows from Corollary~\ref{cor:null} that $\Xi^n_N \to 0$ as $n\to\infty$. In addition, by virtue of Theorem~\ref{thm:it} (applied to the discrete setting~\eqref{eq:discreteproblem} and~\eqref{eq:lindisproblem}), we have that $\un{n} \to \dissol{N}$ as $n \to \infty$. Then, invoking the reliability (A2) and the continuity (A1), we conclude that
\begin{align*}
\norm{u^\star-\dissol{N}}_X \leq \cref{eq:a2} \eta_N(\dissol{N}) = \cref{eq:a2}\lim_{n \to \infty}  \eta_N(\un{n})
=\cref{eq:a2}\lim_{n \to \infty}  \Upsilon^n_N \leq \cref{eq:a2} \lim_{n \to \infty} \lambda^{-1} \Xi^n_N=0.
\end{align*}
It follows that $u^\star=\dissol{N}$, and therefore $\un{n} \to u^\star$ as $n \to \infty$. In particular, Algorithm~\ref{alg:praetal} will generate an approximate solution which, for sufficiently large~$n$, is arbitrarily close to the exact solution of~\eqref{eq:F=0}.
\item If, for some $n,N \in \mathbb{N}$, the while loop terminates, then we have the bound $\Xi^n_N\le\lambda\Upsilon^n_N$. Thus, in the special situation where $\Upsilon^n_N=\eta_N(u_N^n)=0$, we directly obtain that~$\Xi^n_N=0$. Then, employing Lemma~\ref{lem:discreteerrorestimate} and Remark~\ref{rem:XN}, we find that $\norm{\dissol{N}-\un{n}}_X\leq \cref{eq:discreteerrorestimate} \|\un{n}-\un{n-1}\|_X=0$, i.e. $\dissol{N}=\un{n}$. Consequently, recalling~(A2), we deduce that
\[
\|u^\star-\un{n}\|_X=\|u^\star-\dissol{N}\|_X
\le\cref{eq:a2} \eta_N(\dissol{N})
=\cref{eq:a2} \Upsilon^n_N=0.
\]
We obtain that $\un{n}=u^\star$, i.e. the exact solution of~\eqref{eq:F=0} is found.
\end{enumerate}
\end{remark}

\subsection{Perturbed contractivity}

We will now turn to the proof of the convergence of Algorithm~\ref{alg:praetal}. More precisely, we will show that the sequence $u_N$ generated by the above ILG procedure converges, under certain assumptions, to the exact solution $u^\star$ of \eqref{eq:F=0}. In view of Remark~\ref{rem:specialcases} we may assume that the while loop always terminates after finitely many steps with $\eta_N(u_N)>0$ for all $N \geq 0$. 

We begin with the following result, which corresponds to~\cite[Proposition 4.10]{GantnerHaberlPraetoriusStiftner:17}. Since we consider general Galerkin discretizations, an additional assumption, cf.~the perturbed contraction~\eqref{eq:contractionperb} below, is imposed.

\begin{proposition} \label{prop:plainconvergence} 
Let {\rm (F1)--(F2)} and {\rm (A1)} be satisfied, and~$\lambda\in(0,\cref{eq:clambda}^{-1})$ be given.  Moreover, for each~$N\ge 0$, assume that the while loop of Algorithm~\ref{alg:praetal} terminates after finitely many steps, thereby yielding an output~$u_N\in X_N$, with $\eta_N(u_N) >0$. Furthermore, suppose that there are constants $0 < \qref{eq:contractionperb} <1$ and $\cref{eq:contractionperb} >0$ such that it holds the perturbed contraction bound
\begin{equation} \label{eq:contractionperb}
 \eta_{N+1}(\dissol{N+1})^2 \leq \qref{eq:contractionperb} \eta_N(\dissol{N})^2+ \cref{eq:contractionperb} \norm{\dissol{N+1}-\dissol{N}}_X^2 \qquad \forall N \geq 0,
\end{equation}
where $\dissol{N} \in X_N$ is the unique solution of \eqref{eq:discreteproblem}. Then, we have that $\eta_N(u_N) \to 0$ as $N \to \infty$.
\end{proposition}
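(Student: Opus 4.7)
My plan is to bridge the assumed perturbed contraction~\eqref{eq:contractionperb} on the exact Galerkin solutions $\dissol{N}$ to a decay statement on the computed iterates $u_N$, in three steps: (i) show that $\norm{\dissol{N+1}-\dissol{N}}_X$ is a null sequence; (ii) feed this into~\eqref{eq:contractionperb} to conclude $\eta_N(\dissol{N})\to0$; and (iii) pass from $\dissol{N}$ to $u_N$ via Lemma~\ref{lem:equivalentestimators}.

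For step (i), I exploit the nestedness~\eqref{eq:nested} and work with the closed subspace $X_\infty:=\overline{\bigcup_{N\ge0}X_N}\subset X$. Since $\F|_{X_\infty}$ still satisfies (F1)--(F2), the main theorem of strongly monotone operators furnishes a unique $u_\infty^\star\in X_\infty$ with $\dprod{\F(u_\infty^\star),v}=0$ for all $v\in X_\infty$. A C\'ea-type argument based on the Galerkin orthogonality $\dprod{\F(u_\infty^\star)-\F(\dissol{N}),v_N}=0$ for $v_N\in X_N\subset X_\infty$, together with (F1)--(F2) applied on the segment $u_\infty^\star-\dissol{N}$, yields
\[
\norm{u_\infty^\star-\dissol{N}}_X\le\frac{\Flc}{\Fsm}\inf_{v_N\in X_N}\norm{u_\infty^\star-v_N}_X.
\]
The right-hand side vanishes as $N\to\infty$ by construction of $X_\infty$, and the triangle inequality then produces $\norm{\dissol{N+1}-\dissol{N}}_X\to0$.

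For step (ii), I appeal to the elementary lemma that if $0\le a_{N+1}\le qa_N+b_N$ with $q\in(0,1)$ and $b_N\to0$, then $a_N\to0$; iterating the inequality splits the resulting sum into a geometrically decaying head plus a tail controlled by $\epsilon/(1-q)$ once $b_k<\epsilon$. Applied with $a_N:=\eta_N(\dissol{N})^2$, $q:=\qref{eq:contractionperb}$, and $b_N:=\cref{eq:contractionperb}\norm{\dissol{N+1}-\dissol{N}}_X^2$, this gives $\eta_N(\dissol{N})\to0$. For step (iii), when the while loop terminates with output $u_N=\un{n}$, its exit condition reads $\norm{\un{n}-\un{n-1}}_X\le\lambda\eta_N(\un{n})$; since $\lambda<\cref{eq:clambda}^{-1}$, the left inequality of~\eqref{eq:eq} in Lemma~\ref{lem:equivalentestimators} delivers $\eta_N(u_N)\le(1-\lambda\cref{eq:clambda})^{-1}\eta_N(\dissol{N})\to 0$, which is the claim.

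The main obstacle I anticipate is step (i): producing a workable C\'ea-type quasi-best-approximation estimate in the nonlinear monotone setting, and correctly identifying $u_\infty^\star$ as the Galerkin limit in $X_\infty$. The remaining two steps then reduce to the already-established Lemma~\ref{lem:equivalentestimators} and a short auxiliary perturbed-contraction lemma.
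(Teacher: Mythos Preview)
Your proposal is correct and follows essentially the same approach as the paper: both define $X_\infty:=\overline{\bigcup_{N\ge0}X_N}$, derive the C\'ea-type estimate via Galerkin orthogonality and (F1)--(F2) to obtain $\norm{\dissol{N+1}-\dissol{N}}_X\to0$, feed this into the perturbed contraction (the paper cites an external lemma for the resulting $a_{N+1}\le qa_N+b_N$ argument, which you spell out explicitly), and then transfer from $\eta_N(\dissol{N})$ to $\eta_N(u_N)$ via Lemma~\ref{lem:equivalentestimators} and the exit condition of the while loop.
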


\begin{proof}
Set $X_\infty:=\overline{\bigcup_{N \geq 0} X_N}$, and denote by $\dissol{\infty} \in X_\infty$ the solution of the weak formulation
 \begin{align*}
   \dissol{\infty} \in X_\infty: \qquad \dprod{\F(\dissol{\infty}),v}=0 \qquad \forall v \in X_\infty.
 \end{align*}
For any $N \geq 0$, Galerkin orthogonality reads $\dprod{\F(\dissol{\infty})-\F(\dissol{N}),v}=0$ for all $v \in X_N$. Thus, by the Lipschitz continuity (F1) and strong monotonicity (F2) we find that 
 \begin{align*}
    \Fsm \norm{\dissol{\infty}-\dissol{N}}_X^2 &\leq \dprod{\F(\dissol{\infty})-\F(\dissol{N}),\dissol{\infty}-\dissol{N}}\\ 
    &= \dprod{\F(\dissol{\infty})-\F(\dissol{N}),\dissol{\infty}-v}\\
  &\leq \Flc \norm{\dissol{\infty}-\dissol{N}}_X\norm{\dissol{\infty}-v}_X,
 \end{align*}
for any $v \in X_N$. This results in the C\'{e}a type estimate 
 \begin{equation} \label{eq:ceatypeestimate}
  \norm{\dissol{\infty}-\dissol{N}}_X \leq \frac{\Flc}{\Fsm} \min_{v \in X_N} \norm{\dissol{\infty}-v}_X.
 \end{equation}
Recalling the nestedness~\eqref{eq:nested} of the Galerkin spaces, and exploiting the definition of $X_\infty$, the above bound \eqref{eq:ceatypeestimate} directly implies that $\dissol{N} \to \dissol{\infty}$ for $N \to \infty$. Consequently, we deduce that $\norm{\dissol{N+1}-\dissol{N}}_X \to 0$ for $N \to \infty$. Hence, by \eqref{eq:contractionperb}, the estimator $\eta_N(\dissol{N})^2$, $N\ge 0$, is contractive up to a non-negative perturbation which tends to~0. This implies that $\eta_N(\dissol{N}) \to 0$ as $N \to \infty$, see, e.g., \cite[Lemma 2.3]{AuradaFerrazPraetorius:12}. Since $u_N=u_N^n$ satisfies $\norm{\un{n}-\un{n-1}}_X \leq \lambda \eta_N(\un{n})$ by construction of Algorithm~\ref{alg:praetal}, Lemma~\ref{lem:equivalentestimators} yields the equivalence of $\eta_N(\dissol{N})$ and $\eta_N(u_N)$. Hence, we conclude that $\eta_N(u_N) \to 0$ as $N \to \infty$.
\end{proof}

\begin{remark}
We emphasize that the perturbed contraction property~\eqref{eq:contractionperb} is satisfied, for instance, in the special case of the finite element method; see~\cite{{GantnerHaberlPraetoriusStiftner:17}} for details.
\end{remark}

Combining the above Proposition~\ref{prop:plainconvergence} and Lemma~\ref{lem:equivalentestimators} leads to the following result.

\begin{cor} 
Given the same assumptions as in Proposition~\ref{prop:plainconvergence} and, additionally, {\rm (A2)}, then $u_N \to u^\star$ for $N \to \infty$, where the sequence~$\{u_N\}_{N\ge 0}$ is generated by the Algorithm~\ref{alg:praetal}, and $u^\star$ is the unique solution of \eqref{eq:F=0}.
\end{cor}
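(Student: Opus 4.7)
The plan is to reduce convergence of $u_N$ to the already-established null convergence of the estimator $\eta_N(u_N)$ via a triangle-inequality argument routed through the exact Galerkin solution $\dissol{N}$. Proposition~\ref{prop:plainconvergence} gives $\eta_N(u_N)\to 0$ as $N\to\infty$, so it suffices to show an inequality of the form $\|u^\star-u_N\|_X\le C\,\eta_N(u_N)$ with a constant $C$ independent of $N$.

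First, I would split the error as
\[
 \|u^\star-u_N\|_X\le \|u^\star-\dissol{N}\|_X+\|\dissol{N}-u_N\|_X
\]
and bound the first summand by (A2), yielding $\|u^\star-\dissol{N}\|_X\le \cref{eq:a2}\eta_N(\dissol{N})$.

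Second, since by construction of Algorithm~\ref{alg:praetal} the output $u_N=u_N^n$ of the while loop fulfils $\|u_N^n-u_N^{n-1}\|_X\le\lambda\eta_N(u_N^n)$, and since $\lambda\in(0,\cref{eq:clambda}^{-1})$, I can invoke Lemma~\ref{lem:equivalentestimators}: this simultaneously provides $\|\dissol{N}-u_N\|_X\le\lambda\cref{eq:discreteerrorestimate}\,\eta_N(u_N)$ and the equivalence $\eta_N(\dissol{N})\le(1+\lambda\cref{eq:clambda})\eta_N(u_N)$. Substituting these two bounds into the triangle inequality gives
\[
 \|u^\star-u_N\|_X\le \bigl[\cref{eq:a2}(1+\lambda\cref{eq:clambda})+\lambda\cref{eq:discreteerrorestimate}\bigr]\,\eta_N(u_N),
\]
with the bracketed constant independent of $N$.

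Finally, passing to the limit $N\to\infty$ and using $\eta_N(u_N)\to 0$ from Proposition~\ref{prop:plainconvergence} completes the argument. There is no real obstacle here: the whole corollary is a bookkeeping step assembling (A2), Lemma~\ref{lem:equivalentestimators}, and Proposition~\ref{prop:plainconvergence}; the only mild point to check is that the hypothesis $\lambda<\cref{eq:clambda}^{-1}$ needed by Lemma~\ref{lem:equivalentestimators} is already part of the assumptions inherited from Proposition~\ref{prop:plainconvergence}.
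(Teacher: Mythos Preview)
Your proof is correct and essentially identical to the paper's: the authors also split $\|u^\star-u_N\|_X$ via the triangle inequality through $\dissol{N}$, apply (A2) and Lemma~\ref{lem:equivalentestimators} to obtain the bound $\|u^\star-u_N\|_X\le\bigl[\lambda\cref{eq:discreteerrorestimate}+\cref{eq:a2}(1+\lambda\cref{eq:clambda})\bigr]\eta_N(u_N)$, and then invoke Proposition~\ref{prop:plainconvergence}. Your observation that $\lambda<\cref{eq:clambda}^{-1}$ is inherited from the hypotheses of Proposition~\ref{prop:plainconvergence} is exactly right.
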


\begin{proof}
Let~$u_N=\un{n}\in X_N$, $n\ge 1$, be the output of Algorithm~\ref{alg:praetal} based on the Galerkin space~$X_N$. Then, by virtue of~\eqref{eq:unusbound}, and due to~(A2) and~\eqref{eq:eq}, we have
\begin{equation}\label{eq:aux20190408a}
\|u^\star-u_N\|_X
\le\|\dissol{N}-\un{n}\|_X+\|u^\star-\dissol{N}\|_X
\le\lambda\cref{eq:discreteerrorestimate}\eta_N(\un{n})+\cref{eq:a2}\eta_N(\dissol{N})
\le\cref{eq:cc}\eta_N(u_N),
\end{equation}
with
\begin{equation}\label{eq:cc}
\cref{eq:cc}:=\lambda\cref{eq:discreteerrorestimate}+\cref{eq:a2}(1+\lambda\cref{eq:clambda}).
\end{equation}
Applying Proposition~\ref{prop:plainconvergence} completes the proof.
\end{proof}

\subsection{Linear convergence}

In this section we show the \emph{linear} convergence of the output sequence $\{u_N\}_{N \geq 0}$ generated by Algorithm~\ref{alg:praetal}. Our analysis follows closely the work~\cite[Theorem~5.3]{GantnerHaberlPraetoriusStiftner:17}. Again, we formulate and prove the result within a more general setting, and, for this purpose, under the additional assumption~\eqref{eq:contractionperb} as before.

Letting
\begin{equation}\label{eq:gamma}
\gamma:=\frac{\Fsm}{2 \cref{eq:contractionperb}}>0,
\end{equation}
with~$\nu>0$ from~(F2), we introduce the quantity
\begin{equation}\label{eq:DeltaN}
\Delta_N:=\H(\dissol{N})-\H(u^\star)+\gamma \eta_N(\dissol{N})^2,
\end{equation}
where~$u^\star\in X$ and~$\dissol{N}\in X_N$ are the (unique) solution of~\eqref{eq:F=0} and its Galerkin approximation from~\eqref{eq:discreteproblem}, respectively. 
By virtue of Lemma~\ref{lemma:energydifference}, provided that~{\rm(F1)--(F3)} hold, we observe that
\[
\Delta_N\ge\frac{\Fsm}{2}\norm{\dissol{N}-u^\star}^2_X\ge 0,
\]
for any~$N\ge 0$.

\begin{theorem} 
Let $\F$ satisfy {\rm (F1)--(F3)}, and assume {\rm (A1)--(A2)}. Furthermore, suppose that there are constants $0 < \qref{eq:contractionperb} <1$ and $\cref{eq:contractionperb} >0$ such that~\eqref{eq:contractionperb} holds true. Then, upon setting 
\begin{equation}\label{eq:qDelta}
\qref{eq:contractionperb}<\qref{eq:qDelta}:=\frac{\Flc \cref{eq:a2}^2+2\gamma\qref{eq:contractionperb}}{\Flc \cref{eq:a2}^2+2\gamma}<1,
\end{equation}
with~$\gamma$ from~\eqref{eq:gamma}, and with~$\lambda\in(0,\cref{eq:clambda}^{-1})$, the following contraction property holds: If the while loop of Algorithm~\ref{alg:praetal} terminates after finitely many steps with $\eta_N(u_N) >0$, for all $N \geq 0$, then we have the (linear) contraction property 
\begin{equation}\label{eq:deltacontraction}
  \Delta_{N+1} \leq \qref{eq:qDelta} \Delta_{N} \qquad \forall N \geq 0.
\end{equation}
Moreover, there exists a constant $\cref{eq:linearconvergence}>0$ such that
\begin{equation} \label{eq:linearconvergence}
 \eta_{N+K}(u_{N+K})^2 \leq \cref{eq:linearconvergence} \qref{eq:qDelta}^K \eta_N(u_N)^2 \qquad \forall N,K \geq 0,
\end{equation}
i.e. the error estimators decay at a linear rate.
\end{theorem}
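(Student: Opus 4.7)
The plan is to first derive the one-step contraction $\Delta_{N+1}\le\qref{eq:qDelta}\Delta_N$ and then iterate it to obtain the linear estimate~\eqref{eq:linearconvergence}. The crucial ingredient for the one-step bound is to control $\|\dissol{N+1}-\dissol{N}\|_X$ (which appears as the perturbation in~\eqref{eq:contractionperb}) by an energy decrease. Since $X_N\subset X_{N+1}$, the Galerkin solution $\dissol{N+1}\in X_{N+1}$ is the exact solution of~\eqref{eq:F=0weak} restricted to the closed subspace $X_{N+1}$, so the proof of Lemma~\ref{lemma:energydifference} applies verbatim on $X_{N+1}$ with $u^\star$ replaced by $\dissol{N+1}$ and $u$ taken as $\dissol{N}\in X_{N+1}$. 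This yields
\[
\frac{\Fsm}{2}\|\dissol{N+1}-\dissol{N}\|_X^2 \le \H(\dissol{N})-\H(\dissol{N+1}).
\]

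Inserting the perturbed contraction~\eqref{eq:contractionperb} into the definition~\eqref{eq:DeltaN} of $\Delta_{N+1}$ and invoking the above inequality together with the choice $\gamma=\nicefrac{\Fsm}{(2\cref{eq:contractionperb})}$, the perturbation term $\gamma\cref{eq:contractionperb}\|\dissol{N+1}-\dissol{N}\|_X^2$ is bounded by $\H(\dissol{N})-\H(\dissol{N+1})$, which cancels the jump in the potential. This leaves
\[
\Delta_{N+1}\le \H(\dissol{N})-\H(u^\star)+\gamma\qref{eq:contractionperb}\eta_N(\dissol{N})^2.
\]
The remaining gap to $\qref{eq:qDelta}\Delta_N$ is handled via the upper bound of Lemma~\ref{lemma:energydifference} combined with the reliability estimate~(A2), which gives $\H(\dissol{N})-\H(u^\star)\le\tfrac{\Flc}{2}\cref{eq:a2}^2\eta_N(\dissol{N})^2$. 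Writing the right-hand side as a convex combination of $\H(\dissol{N})-\H(u^\star)$ and $\gamma\eta_N(\dissol{N})^2$ and optimizing the coefficient produces exactly the ratio $\qref{eq:qDelta}=(\Flc\cref{eq:a2}^2+2\gamma\qref{eq:contractionperb})/(\Flc\cref{eq:a2}^2+2\gamma)\in(\qref{eq:contractionperb},1)$ from~\eqref{eq:qDelta}, so $\Delta_{N+1}\le\qref{eq:qDelta}\Delta_N$ as claimed.

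Iterating the contraction yields $\Delta_{N+K}\le\qref{eq:qDelta}^K\Delta_N$. Dropping the non-negative term $\H(\dissol{N+K})-\H(u^\star)$ gives $\gamma\eta_{N+K}(\dissol{N+K})^2\le\Delta_{N+K}\le\qref{eq:qDelta}^K\Delta_N$, while bounding $\Delta_N$ from above by $(\tfrac{\Flc}{2}\cref{eq:a2}^2+\gamma)\eta_N(\dissol{N})^2$ (again via Lemma~\ref{lemma:energydifference} and~(A2)) converts the right-hand side into a multiple of $\eta_N(\dissol{N})^2$. Finally, since the while loop in Algorithm~\ref{alg:praetal} terminates with $\|\un{n}-\un{n-1}\|_X\le\lambda\eta_N(\un{n})$, the equivalence~\eqref{eq:eq} of Lemma~\ref{lem:equivalentestimators} translates the bound at both indices: $\eta_N(\dissol{N})\le(1+\lambda\cref{eq:clambda})\eta_N(u_N)$ and $\eta_{N+K}(u_{N+K})\le(1-\lambda\cref{eq:clambda})^{-1}\eta_{N+K}(\dissol{N+K})$, producing~\eqref{eq:linearconvergence} with
\[
\cref{eq:linearconvergence}:=\frac{(1+\lambda\cref{eq:clambda})^2}{(1-\lambda\cref{eq:clambda})^2}\cdot\frac{\Flc\cref{eq:a2}^2+2\gamma}{2\gamma}.
\]
The main obstacle is the first step: realizing that one must apply Lemma~\ref{lemma:energydifference} inside the enriched space $X_{N+1}$ to turn the uncontrolled Galerkin increment $\|\dissol{N+1}-\dissol{N}\|_X^2$ into a telescoping potential difference; without this observation the perturbation $\cref{eq:contractionperb}\|\dissol{N+1}-\dissol{N}\|_X^2$ cannot be absorbed and the contraction fails.
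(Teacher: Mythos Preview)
Your proof is correct and follows essentially the same approach as the paper's: both absorb the perturbation term via the energy estimate $\tfrac{\Fsm}{2}\|\dissol{N+1}-\dissol{N}\|_X^2\le\H(\dissol{N})-\H(\dissol{N+1})$ (Lemma~\ref{lemma:energydifference} applied on $X_{N+1}$) together with the choice $\gamma=\Fsm/(2\cref{eq:contractionperb})$, and then convert between $\H(\dissol{N})-\H(u^\star)$ and $\eta_N(\dissol{N})^2$ via~(A2) and the upper bound in~\eqref{eq:energydifference} to extract the common factor $\qref{eq:qDelta}$. The only cosmetic difference is that the paper rewrites $\qref{eq:contractionperb}\eta_N(\dissol{N})^2=\qref{eq:qDelta}\eta_N(\dissol{N})^2-(\qref{eq:qDelta}-\qref{eq:contractionperb})\eta_N(\dissol{N})^2$ and then lowers the subtracted piece, whereas you phrase the same manipulation as bounding $\H(\dissol{N})-\H(u^\star)$ above; your explicit constant $\cref{eq:linearconvergence}$ is also correct.
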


\begin{proof}
Given any integers~$N\ge K\ge 0$, and corresponding Galerkin subspaces~$X_K \subseteq X_N\subset X$. Then, using Lemma~\ref{lemma:energydifference}, 
with~$X$ being replaced by~$X_N$, we have that
 \begin{equation} \label{eq:energykn}
    \frac{\Fsm}{2} \norm{\dissol{K}-\dissol{N}}_X^2 \leq \H(\dissol{K})-\H(\dissol{N}) \leq \frac{\Flc}{2} \norm{\dissol{K}-\dissol{N}}^2_X.
 \end{equation}
Furthermore, recalling~\eqref{eq:DeltaN}, and using the perturbed contraction assumption~\eqref{eq:contractionperb}, we obtain that
\begin{align*}
 \Delta_{N+1}
 & \leq \H(\dissol{N})-\H(u^\star)-(\H(\dissol{N})-\H(\dissol{N+1}))+\gamma\left(\qref{eq:contractionperb} \eta_N(\dissol{N})^2+\cref{eq:contractionperb} \norm{\dissol{N+1}-\dissol{N}}_X^2\right).
\end{align*}
Invoking \eqref{eq:energykn}, and applying the definition of $\gamma$ from~\eqref{eq:gamma}, we arrive at
\begin{align*}
 \Delta_{N+1} 
 &\leq \H(\dissol{N})-\H(u^\star) +\left(\gamma \cref{eq:contractionperb}-\frac{\Fsm}{2}\right) \norm{\dissol{N+1}-\dissol{N}}_X^2+\gamma \qref{eq:contractionperb} \eta_N(\dissol{N})^2\\
&= \H(\dissol{N})-\H(u^\star)+\gamma \qref{eq:contractionperb} \eta_N(\dissol{N})^2.
\end{align*}
Here, owing to the reliability assumption~(A2), and upon implementing~\eqref{eq:energydifference}, we note that
\begin{align*}
\qref{eq:contractionperb} \eta_N(\dissol{N})^2
&=\qref{eq:qDelta} \eta_N(\dissol{N})^2-(\qref{eq:qDelta}-\qref{eq:contractionperb}) \eta_N(\dissol{N})^2\\
&\le\qref{eq:qDelta} \eta_N(\dissol{N})^2
-(\qref{eq:qDelta}-\qref{eq:contractionperb})\cref{eq:a2}^{-2}\norm{u^\star-\dissol{N}}_X^2\\
&\le\qref{eq:qDelta} \eta_N(\dissol{N})^2
-2(\qref{eq:qDelta}-\qref{eq:contractionperb})\Flc^{-1}\cref{eq:a2}^{-2}(\H(\dissol{N})-\H(u^\star)).
\end{align*}
Thus, it follows that
\[
 \Delta_{N+1} \leq\left(1-2\gamma(\qref{eq:qDelta}-\qref{eq:contractionperb})\Flc^{-1}\cref{eq:a2}^{-2}\right)(\H(\dissol{N})-\H(u^\star))+\gamma\qref{eq:qDelta} \eta_N(\dissol{N})^2.
\]
Noticing that $1-2\gamma(\qref{eq:qDelta}-\qref{eq:contractionperb})\Flc^{-1}\cref{eq:a2}^{-2}
=\qref{eq:qDelta}$, yields~\eqref{eq:deltacontraction}. Hence, by induction, it holds that $\Delta_{N+K} \leq \qref{eq:qDelta}^K \Delta_N$. From this inequality, together with \eqref{eq:eq} and the fact that $\H(\dissol{N+K})-\H(u^\star)\ge 0$, cf.~\eqref{eq:energydifference}, we conclude that
\begin{align*}
 \eta_{N+K}(u_{N+K})^2 \simeq \eta_{N+K}(\dissol{N+K})^2 \leq \gamma^{-1}\Delta_{N+K} \leq \gamma^{-1}\qref{eq:qDelta}^K \Delta_N. 
\end{align*}
Employing again~\eqref{eq:energydifference} and making use of the reliability condition~(A2), this leads to
\begin{align*}
 \eta_{N+K}(u_{N+K})^2 \lesssim \qref{eq:qDelta}^K\left(\frac{\Flc}{2 \gamma} \norm{u^\star-\dissol{N}}_X^2 + \eta_N(\dissol{N})^2\right) \lesssim \qref{eq:qDelta}^K \eta_N(\dissol{N})^2.
\end{align*}
Therefore, once again applying~\eqref{eq:eq}, we deduce~\eqref{eq:linearconvergence}.
\end{proof}

\subsection{Uniform bound for the number of linearization steps}

Given that the while loop of Algorithm~\ref{alg:praetal} terminates after finitely many steps for all $N \geq 0$, we will show that the number of iterative linearization steps~\eqref{eq:lindisproblem} on each Galerkin space $X_N$, which will be denoted by $\# \mathrm{It}(N)$, can be (uniformly) bounded. As before we assume that $\eta_N(u_N)>0$ for all $N \geq 0$. The following result is in particular~\cite[Proposition~4.6]{GantnerHaberlPraetoriusStiftner:17}, however, we will adapt the proof to the effect that it applies for the contraction-like property~\eqref{eq:ndiff1diff} (instead of the contraction of the Zarantonello iteration exploited in~\cite{GantnerHaberlPraetoriusStiftner:17}). 


\begin{proposition} \label{prop:bounditerates} 
Suppose {\rm (F1)--(F4)} and {\rm (A1)--(A2)}. Let $\lambda\in(0,\cref{eq:clambda}^{-1})$ be the adaptivity parameter from Algorithm~\ref{alg:praetal}. Suppose that the while loop of Algorithm~\ref{alg:praetal} terminates after finitely many steps with $\eta_N(u_N)>0$ for all $N \geq 0$. Then, the number of iterative linearization steps $\# \mathrm{It}(N)$ on $X_N$ satisfies the estimate 
\begin{align} \label{eq:bounditerates}
 \# \mathrm{It}(N) \leq \frac{2}{\log\left(1+\cref{eq:sumbound}^{-1}\right)}\log\left(\left(C\lambda^{-1}+C'\right)\left(1+\cref{eq:sumbound}^{-1}\right)\max\left\{1,\frac{\eta_{N-1}(u_{N-1})}{\eta_N(u_N)}\right\}\right)+1, 
 \end{align}
for all $N\ge 1$, where 
\[
C:=\frac{\Flc^{\nicefrac12}\cref{eq:sumbound}^{\nicefrac12}\cref{eq:cc}}{\sqrt2C^{\nicefrac12}_\H} 
  \qquad \text{and} \qquad C':= \frac{C\cref{eq:a1} \cref{eq:sumbound}^{\nicefrac{1}{2}}}{1-\left(1+\cref{eq:sumbound}^{-1}\right)^{-\nicefrac{1}{2}}}.
\]
\end{proposition}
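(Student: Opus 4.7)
The argument is to control $n := \#\mathrm{It}(N)$ from above by running the contraction-like property~\eqref{eq:ndiff1diff} \emph{backwards}: the exit condition provides a lower bound on the smallest iterate differences, while the contraction bounds them from above. Combined with a bound on the "initial discrepancy" $\|u_N^1-u_N^0\|_X$ coming from the energy identity, this pins down $n$ up to a logarithm. For convenience set $C_{\mathrm{sum}} := \cref{eq:sumbound}$ and $r := (1+C_{\mathrm{sum}}^{-1})^{-1/2} \in (0,1)$, so that~\eqref{eq:ndiff1diff} reads $\|u_N^j - u_N^{j-1}\|_X \le C_{\mathrm{sum}}^{1/2} r^{j-2}\|u_N^1-u_N^0\|_X$ for all $j\ge 2$. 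The bound~\eqref{eq:bounditerates} is immediate when $n\le 3$ (since $C\lambda^{-1}\ge 1$), so I assume $n\ge 3$ from here on.

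Because the while loop did not terminate at iteration $n-1$, we have $\lambda\eta_N(u_N^{n-1})<\|u_N^{n-1}-u_N^{n-2}\|_X$, and because it did terminate at iteration $n$, we have $\|u_N^n-u_N^{n-1}\|_X\le\lambda\eta_N(u_N)$. Applying the Lipschitz property~(A1) to replace $\eta_N(u_N^{n-1})$ by $\eta_N(u_N)-\cref{eq:a1}\|u_N^n-u_N^{n-1}\|_X$ and then using the contraction-like estimates for both $\|u_N^{n-1}-u_N^{n-2}\|_X$ and $\|u_N^n-u_N^{n-1}\|_X$, one obtains
\[
\lambda \eta_N(u_N) < C_{\mathrm{sum}}^{1/2}\, r^{n-3}\bigl(1+\lambda\cref{eq:a1} r\bigr)\|u_N^1-u_N^0\|_X.
\]
Using $1-r\le 1$ to replace the factor $(1+\lambda\cref{eq:a1} r)$ by $(1+\lambda\cref{eq:a1}/(1-r))$ (or, more sharply, by summing the geometric tail of the contraction-like bounds beyond step $n-1$, which naturally produces the $1/(1-r)$ factor in $C'$), dividing by $\lambda$, and rearranging leads to
\[
(1+C_{\mathrm{sum}}^{-1})^{(n-3)/2} \;<\; \left(\tfrac{C_{\mathrm{sum}}^{1/2}}{\lambda}+\tfrac{\cref{eq:a1}\,C_{\mathrm{sum}}^{1/2}}{1-r}\right)\tfrac{\|u_N^1-u_N^0\|_X}{\eta_N(u_N)}.
\]

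It remains to bound the ratio $\|u_N^1-u_N^0\|_X/\eta_N(u_N)$. Here (F4) applied on $X_N$ gives $C_\H\|u_N^1-u_N^0\|_X^2 \le \H(u_N^0)-\H(u_N^1) \le \H(u_N^0)-\H(\dissol{N})$, and Lemma~\ref{lemma:energydifference} (on $X_N$) yields $\H(u_N^0)-\H(\dissol{N}) \le (\Flc/2)\|\dissol{N}-u_N^0\|_X^2$. Since $u_N^0 = u_{N-1}$, the triangle inequality together with (A2) and the a~posteriori estimate~\eqref{eq:aux20190408a} controls $\|\dissol{N}-u_{N-1}\|_X$ by $\cref{eq:cc}(\eta_N(u_N)+\eta_{N-1}(u_{N-1}))\le 2\cref{eq:cc}\max\{\eta_N(u_N),\eta_{N-1}(u_{N-1})\}$. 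Plugging this into the previous display and taking logarithms (the factor $(1+C_{\mathrm{sum}}^{-1})$ in~\eqref{eq:bounditerates} absorbs the shift from $r^{-(n-3)}$ to $r^{-(n-1)}$ and the numerical factor~$2$ from the $\max$-bound) produces~\eqref{eq:bounditerates} with the stated constants $C$ and $C'$. The main obstacle is the constant bookkeeping in the last step: one must be careful to route the estimate through $\max\{\eta_N(u_N),\eta_{N-1}(u_{N-1})\}$ rather than each $\eta$ separately, and to identify the $1/(1-r)$ factor in $C'$ either by crude upper-bounding or, preferably, by the geometric-tail argument $\|u_N^{n-1}-\dissol{N}\|_X \le C_{\mathrm{sum}}^{1/2} r^{n-2}\|u_N^1-u_N^0\|_X/(1-r)$, which arises from conceptually continuing the iteration past step~$n$.
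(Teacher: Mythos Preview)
Your overall strategy---exploit that step $n-1$ failed to terminate, push the contraction-like decay back to $\|u_N^1-u_N^0\|_X$, and bound the latter via the energy---is sound and does yield a bound of the form~\eqref{eq:bounditerates}. It is, however, a genuinely different route from the paper's: the paper argues by contradiction, picking an intermediate index $n<k:=\#\mathrm{It}(N)$ and showing that the stopping criterion would already hold at $n$. In particular, the paper relates $\eta_N(u_N)=\eta_N(u_N^k)$ to $\eta_N(u_N^n)$ through the geometric tail $\sum_{j=n}^{k-1}\|u_N^{j+1}-u_N^j\|_X$, and this tail is precisely what produces the factor $C_{\mathrm{sum}}^{1/2}/(1-r)$ in $C'$. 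Your single-step use of~(A1) naturally gives $C\cref{eq:a1}r$ instead, which you then artificially enlarge; this mismatch is harmless for the qualitative result but means your argument does not literally reproduce the stated $C'$.

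There is one genuine gap. You estimate $\|u_N^1-u_N^0\|_X$ via $\H(u_N^0)-\H(\dissol{N})\le(\Flc/2)\|\dissol{N}-u_{N-1}\|_X^2$ and then bound $\|\dissol{N}-u_{N-1}\|_X$ by a triangle inequality through $u^\star$, picking up a factor~$2$. Your claim that the $(1+C_{\mathrm{sum}}^{-1})$ in~\eqref{eq:bounditerates} ``absorbs'' this factor~$2$ is false: that factor exactly accounts for the shift $r^{-(n-3)}\mapsto r^{-(n-1)}$ and nothing more, so you end up with an extra additive $\tfrac{2\log 2}{\log(1+C_{\mathrm{sum}}^{-1})}$ in the final bound, which in general is \emph{not} dominated by the stated constants. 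The fix is simple and is what the paper does: drop all the way to the global minimum rather than the discrete one, i.e.\ use $\H(u_N^0)-\H(u_N^1)\le\H(u_N^0)-\H(u^\star)\le(\Flc/2)\|u^\star-u_{N-1}\|_X^2$ and then~\eqref{eq:aux20190408a} directly to get $\|u^\star-u_{N-1}\|_X\le\cref{eq:cc}\,\eta_{N-1}(u_{N-1})$. This yields the clean factor $d_N=\eta_{N-1}(u_{N-1})/\eta_N(u_N)$ (no~$2$, no $\max$), after which your direct argument in fact delivers a bound at least as good as~\eqref{eq:bounditerates}.
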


\begin{proof}
We split the proof into two parts.

\emph{Part 1:}
Let 
\[
d_N:=\frac{\eta_{N-1}(u_{N-1})}{\eta_N(u_N)},\qquad N\ge 1,
\] 
and choose~$n\in\mathbb{N}$, $n \geq 2$, minimal such that
\[
 0<\frac{C d_N \left(1+\cref{eq:sumbound}^{-1}\right)^{1-\nicefrac{n}{2}}}{1-C' d_N \left(1+\cref{eq:sumbound}^{-1}\right)^{1-\nicefrac{n}{2}}} \leq \lambda.
\]
We denote by $k:=\# \mathrm{It}(N)$ the number of linearization steps on $X_N$, and aim to show that $k \leq n$. To this end, we assume by contradiction that $k>n > 1$. By definition of Algorithm~\ref{alg:praetal}, $k \geq 1$ is the minimal number such that $\norm{\un{k}-\un{k-1}}_X \leq \lambda \eta_N(\un{k})$. Invoking \eqref{eq:ndiff1diff}, we find that
\[
  \norm{\un{n}-\un{n-1}}_X^2 \leq \cref{eq:sumbound} \left(1+\cref{eq:sumbound}^{-1}\right)^{2-n} \norm{\un{1}-\un{0}}_X^2.
\]
Let us bound $\norm{\un{1}-\un{0}}_X^2$. From (F4) and \eqref{eq:energydifference} we obtain
\begin{align*}
 C_\H \norm{u_N^1-u_N^0}_X^2 \leq \H(u_N^0)-\H(u_N^1) \leq \H(u_N^0)-\H(u^\star) \leq \frac{\Flc}{2} \norm{u^\star-u_N^0}_X^2.
\end{align*} 
Hence, we infer the bound
 \begin{align*}
  \norm{\un{n}-\un{n-1}}_X^2 &\leq\frac{\Flc\cref{eq:sumbound}}{2C_\H} \left(1+\cref{eq:sumbound}^{-1}\right)^{2-n}  \norm{u^\star-\un{0}}_X^2.
 \end{align*}
Since $u_N^0=u_{N-1}$, we apply~\eqref{eq:aux20190408a} to deduce that 
 \[
 \norm{u^\star-\un{0}}_X
 \leq\cref{eq:cc} \eta_{N-1}(u_{N-1}).
 \]
Therefore, we arrive at 
\begin{align}\label{eq:bounditeq1} 
\begin{split}
\norm{\un{n}-\un{n-1}}_X^2 
&\leq\frac{\Flc\cref{eq:sumbound}\cref{eq:cc}^2}{2C_\H} \left(1+\cref{eq:sumbound}^{-1}\right)^{2-n}\eta_{N-1}(u_{N-1})^2
=C^2 \left(1+\cref{eq:sumbound}^{-1}\right)^{2-n}d^2_N\eta_{N}(u_{N})^2.
\end{split}
\end{align}
Invoking the stability (A1), we find that
\begin{align*}
\eta_N(u_N)=\eta_N(\un{k}) \leq \eta_N(\un{n})+\cref{eq:a1} \norm{\un{k}-\un{n}}_X \leq \eta_N(\un{n})+\cref{eq:a1} \sum_{j=n}^{k-1} \norm{\un{j+1}-\un{j}}_X. 
\end{align*}
Restricting~\eqref{eq:ndiff1diff} to~$X_N$ and shifting indices, yields that 
\begin{align*}
 \norm{\un{j+1}-\un{j}}_X \leq \cref{eq:sumbound}^{\nicefrac{1}{2}}  \left(\left(1+\cref{eq:sumbound}^{-1}\right)^{-\nicefrac{1}{2}}\right)^{j-n}\norm{\un{n}-\un{n-1}}_X, \qquad j \ge n .
\end{align*}
Combining the previous inequalities leads to
\begin{align*}
 \eta_N(u_N) &\leq \eta_N(\un{n})+\cref{eq:a1} \cref{eq:sumbound}^{\nicefrac{1}{2}}\norm{\un{n}-\un{n-1}}_X \sum_{j=0}^{k-n-1} \left(\left(1+\cref{eq:sumbound}^{-1}\right)^{-\nicefrac{1}{2}}\right)^{j} \\
 & \leq \eta_N(\un{n})+  \frac{\cref{eq:a1} \cref{eq:sumbound}^{\nicefrac{1}{2}}}{1-\left(1+\cref{eq:sumbound}^{-1}\right)^{-\nicefrac{1}{2}}}\norm{\un{n}-\un{n-1}}_X.
\end{align*}
Inserting this inequality into \eqref{eq:bounditeq1} yields
\begin{align*}
 \norm{\un{n}-\un{n-1}}_X 
 &\leq Cd_N\left(1+\cref{eq:sumbound}^{-1}\right)^{1-\nicefrac{n}{2}}\eta_{N}(\un{n})
 +C'd_N\left(1+\cref{eq:sumbound}^{-1}\right)^{1-\nicefrac{n}{2}}\norm{\un{n}-\un{n-1}}_X.
\end{align*}
A straightforward manipulation leads to 
\begin{align*}
 \norm{\un{n}-\un{n-1}}_X \leq \frac{C d_N \left(1+\cref{eq:sumbound}^{-1}\right)^{1-\nicefrac{n}{2}}}{1-C' d_N \left(1+\cref{eq:sumbound}^{-1}\right)^{1-\nicefrac{n}{2}}}\eta_N(\un{n}) \leq \lambda \eta_N(\un{n}),
\end{align*}
which contradicts the minimality of $k$, wherefore it must hold that $k \leq n$.

\emph{Part 2:}
Set 
\[
M :=\frac{2}{\log\left(1+\cref{eq:sumbound}^{-1}\right)}\log\left(\left(C\lambda^{-1}+C'\right)\left(1+\cref{eq:sumbound}^{-1}\right)\max\left\{1,d_N\right\}\right).
\] 
We show that
\begin{equation}\label{eq:aux20190408b}
0<\frac{C d_N \left(1+\cref{eq:sumbound}^{-1}\right)^{1-\nicefrac{\ceil{M}}{2}}}{1-C' d_N \left(1+\cref{eq:sumbound}^{-1}\right)^{1-\nicefrac{\ceil{M}}{2}}}\le\lambda,
\end{equation}
where~$\ceil{M}$ denotes the smallest integer larger than or equal to~$M$. We verify that
\begin{align}
 \left(1+\cref{eq:sumbound}^{-1}\right)^{\nicefrac{\ceil{M}}{2}}
 &\ge\exp\left(\nicefrac{M}{2}\log\left(1+\cref{eq:sumbound}^{-1}\right)\right)\nonumber \\
 &= \left(C\lambda^{-1}+C'\right)\left(1+\cref{eq:sumbound}^{-1}\right)\max\{1,d_N\}\label{eq:aux20190424a}  \\
 & > C' d_N \left(1+\cref{eq:sumbound}^{-1}\right)>0.\nonumber
\end{align}
This proves the lower bound in~\eqref{eq:aux20190408b}. In a similar way, the upper bound is obtained. Indeed, multiplying~\eqref{eq:aux20190424a} by~$\lambda$, we have that
\[
 \lambda  \left(1+\cref{eq:sumbound}^{-1}\right)^{\nicefrac{\ceil{M}}{2}} \geq \lambda C' d_N \left(1+\cref{eq:sumbound}^{-1}\right)+C d_N \left(1+\cref{eq:sumbound}^{-1}\right).
\]
Thence, a simple manipulation leads to 
\[\lambda \left(1-C' d_N \left(1+\cref{eq:sumbound}^{-1}\right)^{1-\nicefrac{\ceil{M}}{2}}\right) \geq C d_N \left(1+\cref{eq:sumbound}^{-1}\right)^{1-\nicefrac{\ceil{M}}{2}},\]
which immediately implies the claim. In summary, using Part~1 of the proof, we conclude that $\# \mathrm{It}(N)=k \leq\ceil{M}$.  
\end{proof}

\begin{remark}
 We note that Proposition~\ref{prop:bounditerates} does not assert a uniform bound for the number of linearization steps since the right-hand side of \eqref{eq:bounditerates} depends on $N$. For many sensible error estimators, however, it holds the contraction property
 \begin{align} \label{eq:qest}
  \norm{u^\star-\dissol{N}}_X \simeq \qref{eq:qest} \norm{u^\star-\dissol{N-1}}_X,
\end{align}
for $N$ large enough, with a contraction constant $0<\qref{eq:qest}<1$. Furthermore, we may assume a reliability and efficiency estimate:
\begin{align} \label{eq:sensestimator}
 \norm{u^\star-\dissol{N}}_X \simeq \cref{eq:sensestimator} \eta_N(\dissol{N}), \qquad N \geq 0.
\end{align}
Then, combining \eqref{eq:qest}, \eqref{eq:sensestimator}, and \eqref{eq:eq}, we obtain
$\eta_N(u_N) \simeq \eta_{N-1}(u_{N-1})$, and consequently, a uniform bound on the number of linearization steps on each Galerkin subspace $X_N$ is guaranteed.
\end{remark}

\begin{remark}
As was mentioned earlier, this result is one of the key parts in the computational complexity analysis of the ILG Algorithm~\ref{alg:praetal}. Indeed, following along the lines of \cite[\S6]{GantnerHaberlPraetoriusStiftner:17} and replacing \cite[Proposition~4.6]{GantnerHaberlPraetoriusStiftner:17} by the above Proposition~\ref{prop:bounditerates}, the almost optimal computational work of Algorithm~\ref{alg:praetal}, under suitable assumptions, can be established in the context of finite element method discretizations.
\end{remark}


\section{Numerical experiments} \label{sec:examples}

In this section we test our ILG Algorithm~\ref{alg:praetal} with two numerical experiments in the context of finite element discretizations of stationary conservation laws.

\subsection{Model problem}

On an open, bounded and polygonal domain $\Omega \subset \mathbb{R}^2$, with Lipschitz boundary $\Gamma=\partial \Omega$, let us consider the second-order elliptic partial differential equation
\begin{align} \label{eq:operatorscl}
u\in X:\qquad \F(u):= - \nabla \cdot \left[\mu\left(\left|\nabla u\right|^2\right) \nabla{u}\right]-g=0\qquad\text{in }X^\star.
\end{align}
Here, we choose~$X:=H^1_0(\Omega)$ to be the standard Sobolev space of $H^1$-functions on~$\Omega$ with zero trace along~$\Gamma$; the inner product and norm on~$X$ are defined, respectively, by~$(u,v)_X:=(\nabla u,\nabla v)_{L^2(\Omega)}$ and~$\norm{u}_X:=\|\nabla u\|_{L^2(\Omega)}$, for $u,v\in X$.  We suppose that $g \in X^\star=H^{-1}(\Omega)$ in~\eqref{eq:operatorscl} is given, and the diffusion parameter $\mu \in C^1([0,\infty))$ fulfils the monotonicity property
\begin{align} \label{en:assmu}
m_\mu(t-s) \leq \mu(t^2)t-\mu(s^2)s \leq M_\mu (t-s), \qquad t \geq s \geq 0,
\end{align}
with constants $M_\mu\ge m_\mu>0$. Under this condition the nonlinear operator~$\F:\,H^1_0(\Omega)\to H^{-1}(\Omega)$ from~\eqref{eq:operatorscl} can be shown to satisfy~(F1) and~(F2), with $\nu=m_{\mu}$ and $\Flc=3M_{\mu}$;
see~\cite[Proposition~25.26]{Zeidler:90}. Moreover, $\F$ has a potential $\operator{H}:X \to  \mathbb{R}$ given by
\begin{align*}
\operator{H}(u):=\int_\Omega \psi\left(\left|\nabla u\right|^2\right) \dx-\dprod{g,u},\qquad u\in X,
\end{align*}
where~$\psi(s):=\nicefrac12\int_0^s \mu(t) \dt$, $s\ge 0$. 
The weak form of the boundary value problem~\eqref{eq:operatorscl} in $X$ reads:
\begin{align}\label{eq:sclweak}
u\in X:\qquad \int_\Omega \muf{u} \nabla u \cdot \nabla v \dx = \dprod{g,v} \qquad \forall v \in X.
\end{align}

In \cite[Section 5.1]{HeidWihler:18} the convergence of the Zarantonello, Ka\v{c}anov, and Newton iteration for the nonlinear boundary value problem~\eqref{eq:operatorscl} was examined.

\subsection{Discretization and refinement indicator}

For the sake of discretizing \eqref{eq:sclweak}, and thereby of obtaining an ILG formulation for~\eqref{eq:operatorscl}, we will use a conforming finite element framework. We consider a sequence of hierarchical, regular and shape-regular meshes $\{\mathcal{T}_N\}_{N\ge 1}$ that partition the domain~$\Omega$ into open and disjoint triangles~$T \in\mathcal{T}_N$ such that $\overline{\Omega}=\bigcup_{T \in \mathcal{T}_N} T$.
Moreover, we consider the finite element space 
\[
X_N:=\left\{v \in H^1_0(\Omega): v|_T \in \mathcal{P}_1(T) \ \forall T \in \mathcal{T}_N\right\},
\]
where we signify by $\mathcal{P}_1(T)$ the space of all affine functions on $T \in \mathcal{T}_N$. The mesh refinements in Algorithm~\ref{alg:praetal} are obtained by means of the newest vertex bisection and the D\"{o}rfler marking strategy, see \cite{Mitchell:91} and~\cite{Doerfler:96}, respectively.

For an edge $e \subset \partial T^+ \cap \partial T^{-}$, which is the intersection of (the closures of) two neighbouring elements $T^{\pm} \in \mathcal{T}_N$, we signify by $\jmp{\bm v}|_e=\bm{v}^{+}|_e \cdot \bm{n}_{T^+}+\bm{v}^{-}|_e\cdot \bm{n}_{T^{-}}$ the jump of a (vector-valued) function $\bm{v}$ along~$e$, where $\bm{v}^{\pm}|_e$ denote the traces of the function $\bm{v}$ on the edge $e$ taken from the interior of $T^{\pm}$, respectively, and $\bm{n}_{T^{\pm}}$ are the unit outward normal vectors on $\partial T^{\pm}$, respectively. For $u \in X_N$ we define the local refinement indicator, for each $T \in \mathcal{T}_N$, and the global error indicator, respectively, by 
\[
 \eta_N(T,u)^2:=h_T^2 \twon{g}{T}^2+h_T \twon{\muf{u}\nabla u}{\partial T \setminus \Gamma}^2, \qquad \eta_N(u):=\left(\sum_{T \in \mathcal{T}_N}\eta_N(T,u)^2 \right)^{\nicefrac{1}{2}}. 
\]
This error estimator satisfies the assumptions (A1)--(A2) for the problem under consideration; we refer to \cite[\S8.3]{GantnerHaberlPraetoriusStiftner:17} for details.

\subsection{Experiments}

We revisit two experiments from \cite{HeidWihler:18}, whereby we test the (modified) adaptive ILG Algorithm~\ref{alg:praetal}. We consider the L-shaped domain $\Omega=(-1,1)^2 \setminus ([0,1] \times [-1,0])$, and start the computations with an initial mesh consisting of 192 uniform triangles. The procedure is run until the number of elements exceeds $10^6$. Moreover, we will always choose the initial guess $u_0^0 \equiv 0$.

\subsubsection{Smooth solution} \label{sec:smoothsolution} We consider the nonlinear diffusion coefficient $\mu(t)=(t+1)^{-1}+\nicefrac{1}{2}$,
for $t \geq 0$, and select~$g$ in~\eqref{eq:operatorscl} such that the analytical solution of \eqref{eq:sclweak} is given by the smooth function $u^\star(x,y)=\sin(\pi x) \sin(\pi y)$. 
It is straightforward to verify that $\mu$ fulfils the bounds~\eqref{en:assmu} so that the assumptions (F1)--(F3) are satisfied. In addition, the convergence of the \emph{Zarantonello}~\eqref{eq:zarantonelloit}, \emph{Ka\v{c}anov}~\eqref{eq:kacanovstrong}, and \emph{Newton}~\eqref{eq:newtonstrong} iterative linearization procedures are guaranteed (with the parameter $\dpa=0.85$ and~$\dpa=1$ in case of the Zarantonello and Newton method, respectively); see~\cite{HeidWihler:18}. 
A priori, for the Newton method, we remark that choosing the damping parameter~$\dpa=1$ (potentially resulting in quadratic convergence of the iterative linearization close to the solution) might lead to a divergent iteration for the given boundary value problem; for this reason, a prediction and correction strategy, which guarantees convergence (and which does not cause any correction of the damping parameter in the current experiments), is presented in~\cite[Remark 2.8]{HeidWihler:18}. 

In Figure~\ref{fig:smoothconvergence} we plot the error estimators (solid lines) and true errors (dashed lines) of our three linearization schemes against the number of elements $|\mathcal{T}_N|$ in the triangulation. In addition, the dashed line without any markers is the graph of the function $|\mathcal{T}_N|^{-\nicefrac{1}{2}}$. We observe the optimal convergence rate $\mathcal{O}(|\mathcal{T}_N|^{-\nicefrac{1}{2}})$ for both (almost) uniform and adaptive mesh refinements corresponding to the parameters $\theta_*=0$ and $\theta_*=0.5$, respectively, in the D\"{o}rfler marking strategy, see~\cite[\S4.2, Eq.~(M$_*$)]{Doerfler:96}.

\begin{figure} 
 \subfloat[$\lambda=0.5$ and $\theta_*=0.5$]{\includegraphics[width=0.48\textwidth]{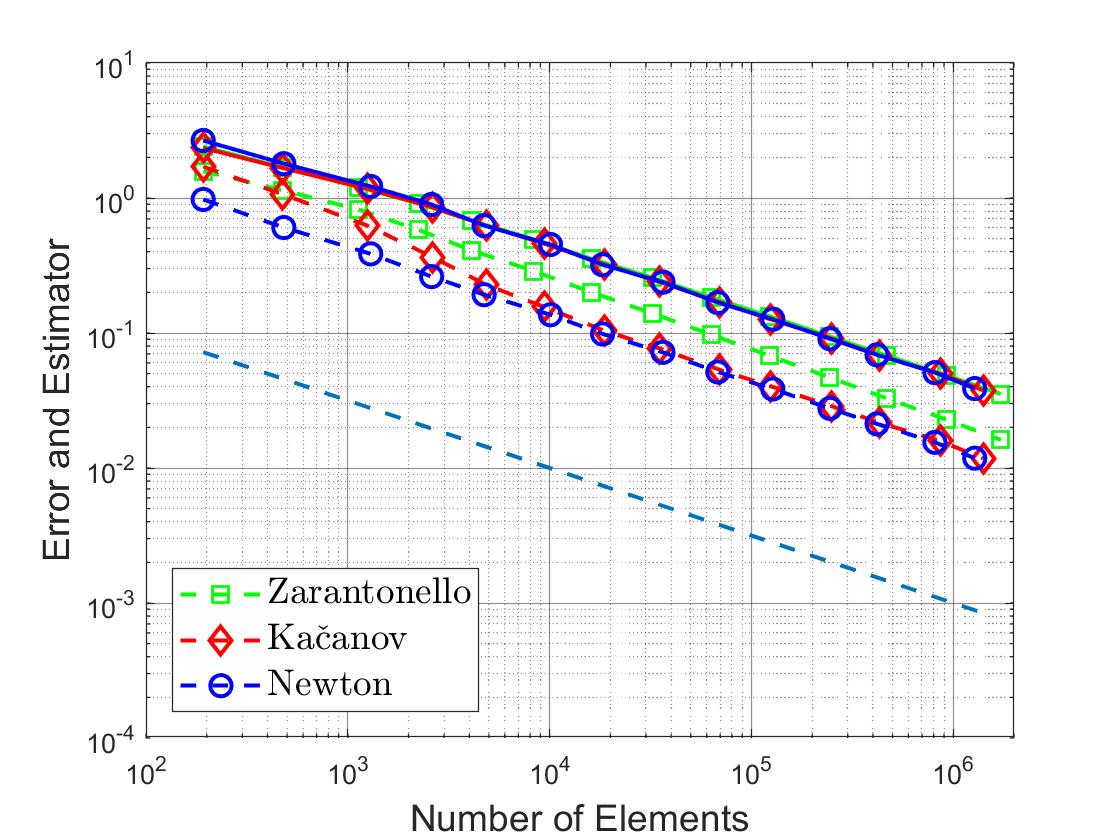}}\hfill
 \subfloat[$\lambda=0.5$ and $\theta_*=0$]{\includegraphics[width=0.48\textwidth]{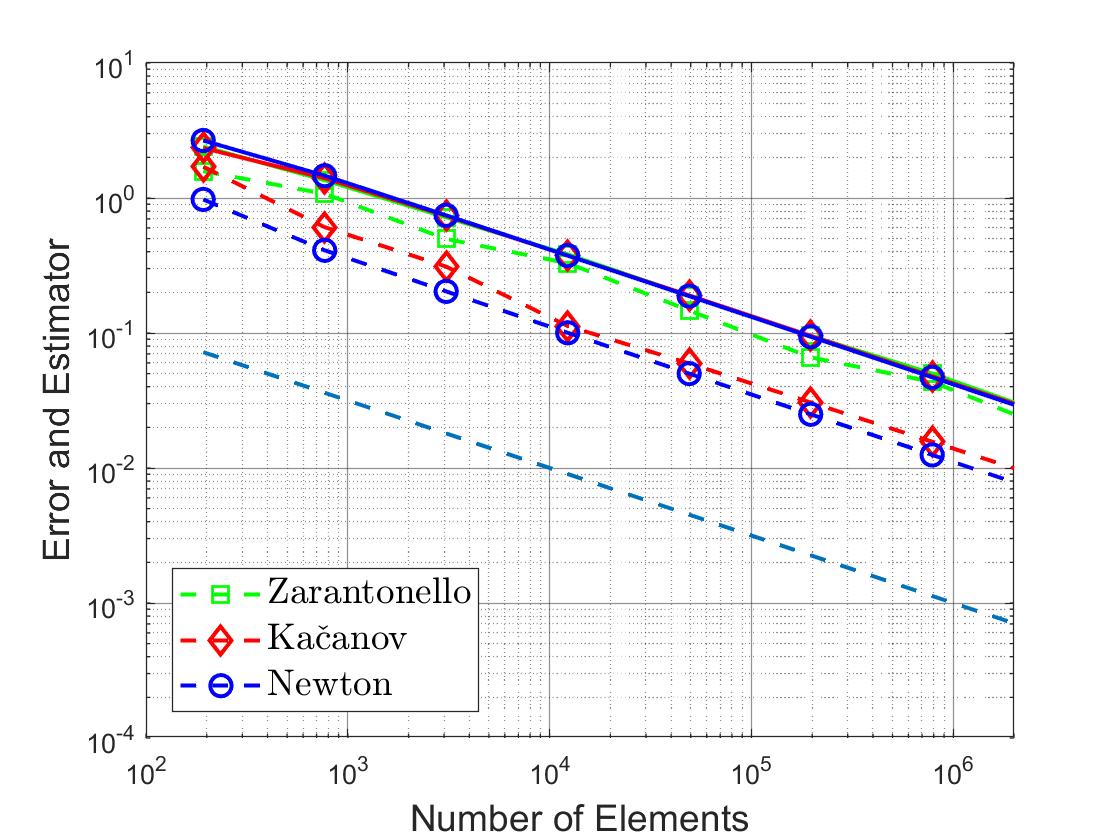}}
 \caption{Experiment~\ref{sec:smoothsolution}: Convergence rates. Left: Adaptively refined meshes. Right: (Almost) uniform meshes.}\label{fig:smoothconvergence}
\end{figure}

In Figure~\ref{fig:smoothitnum} we can observe the uniformly bounded number of linearization steps on a given mesh for any of the three considered fixed-point methods, and for both choices of the adaptivity parameter $\lambda=0.1$ and $\lambda=0.001$ in Algorithm~\ref{alg:praetal}. For the value $\lambda=0.1$, the number of linearization steps on a given mesh does not differ essentially between the three considered iteration schemes. However, there is a remarkable difference for the choice $\lambda=0.001$. The Newton iteration clearly outperforms the other two fixed-point methods, which is not surprising because of the local quadratic convergence regime. In addition, we can also observe that the Ka\v{c}anov method is superior to the Zarantonello iteration in view of the number of linearization steps.

\begin{figure}
 \subfloat[$\lambda=0.1$ and $\theta_*=0.5$]{\includegraphics[width=0.48\textwidth]{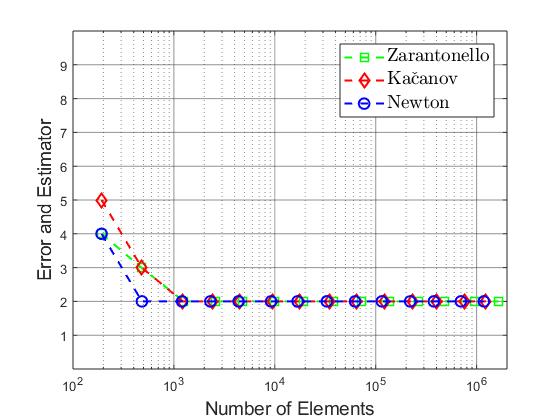}}\hfill
 \subfloat[$\lambda=0.001$ and $\theta_*=0.5$]{\includegraphics[width=0.48\textwidth]{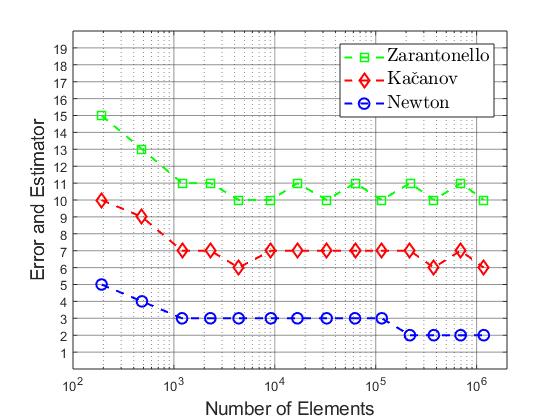}}
 \caption{Experiment~\ref{sec:smoothsolution}: Number of iterations.}  \label{fig:smoothitnum}
\end{figure}

\subsubsection{Nonsmooth solution} \label{sec:nonsmoothsolution} In our second experiment, we consider the nonlinear diffusion parameter $\mu(t)=1+\mathrm{e}^{-t}$, for $t \geq 0$. Again, it is easily seen that~$\mu$ satisfies~\eqref{en:assmu}. Moreover, it can be shown that the three linearization schemes under consideration will converge for appropriate choices of the parameter $\dpa$, see~\cite{HeidWihler:18}. We choose $g$ in \eqref{eq:operatorscl} such that the analytical solution is given by 
\[
u^\star(r,\varphi)=r^{\nicefrac{2}{3}}\sin\left(\nicefrac{2\varphi}{3}\right)(1-r \cos(\varphi))(1+r \cos(\varphi))(1- r \sin(\varphi))(1+r \sin(\varphi))\cos(\varphi),
\]
where $r$ and $\varphi$ are polar coordinates. This is the prototype singularity for (linear) second-order elliptic problems with homogeneous Dirichlet boundary conditions in the L-shaped domain; in particular, we note that the gradient of~$u^\star$ is unbounded at the origin. We let $\dpa=0.5$ for the Zarantonello iteration, and use the damping parameter $\dpa=1$ for the Newton method as in the experiment before.

For the choice $\theta_*=0.5$ in D\"orfler's marking procedure we retain the (almost) optimal convergence rate for both the error and the estimator. Due to the singularity, however, the convergence rate is reduced when the mesh is (almost) uniformly refined (i.e.~corresponding to the value~$\theta_*=0$). For the number of linearization steps on each Galerkin space, we can make the same observations as for the smooth case from before.

\begin{figure} 
 \subfloat[$\lambda=0.5$ and $\theta_*=0.5$]{\includegraphics[width=0.48\textwidth]{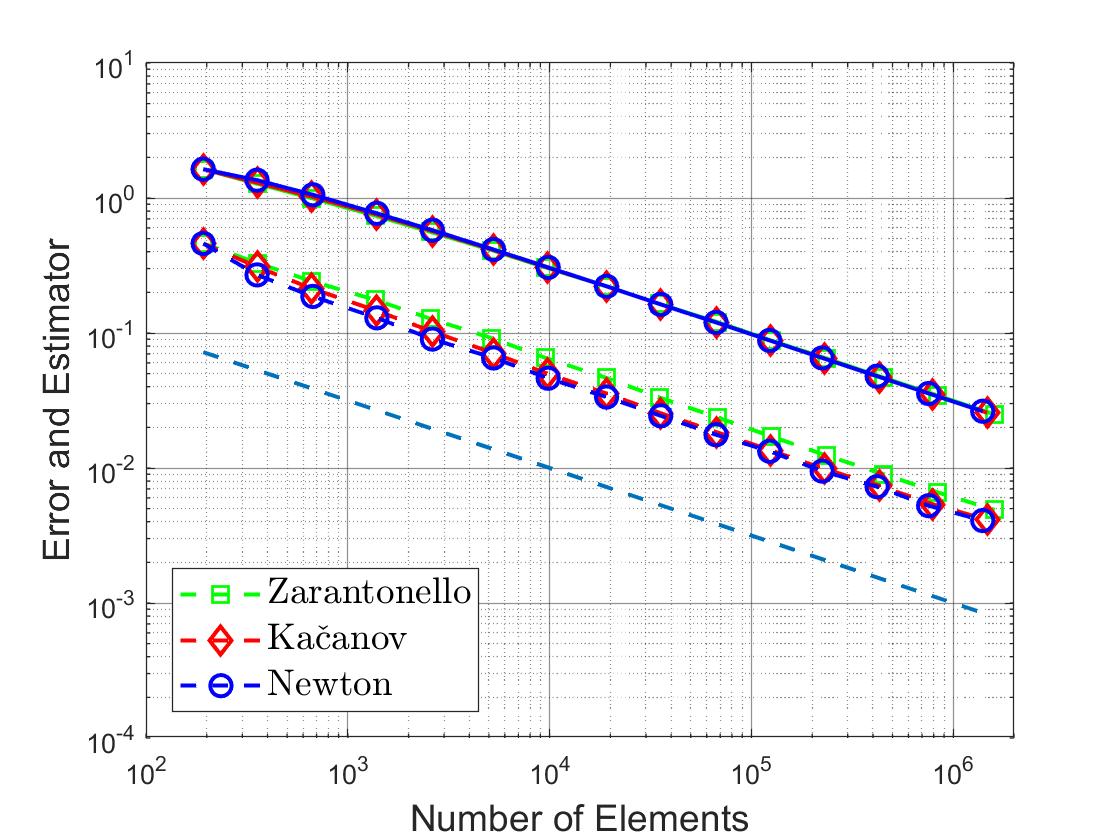}}\hfill
 \subfloat[$\lambda=0.5$ and $\theta_*=0$]{\includegraphics[width=0.48\textwidth]{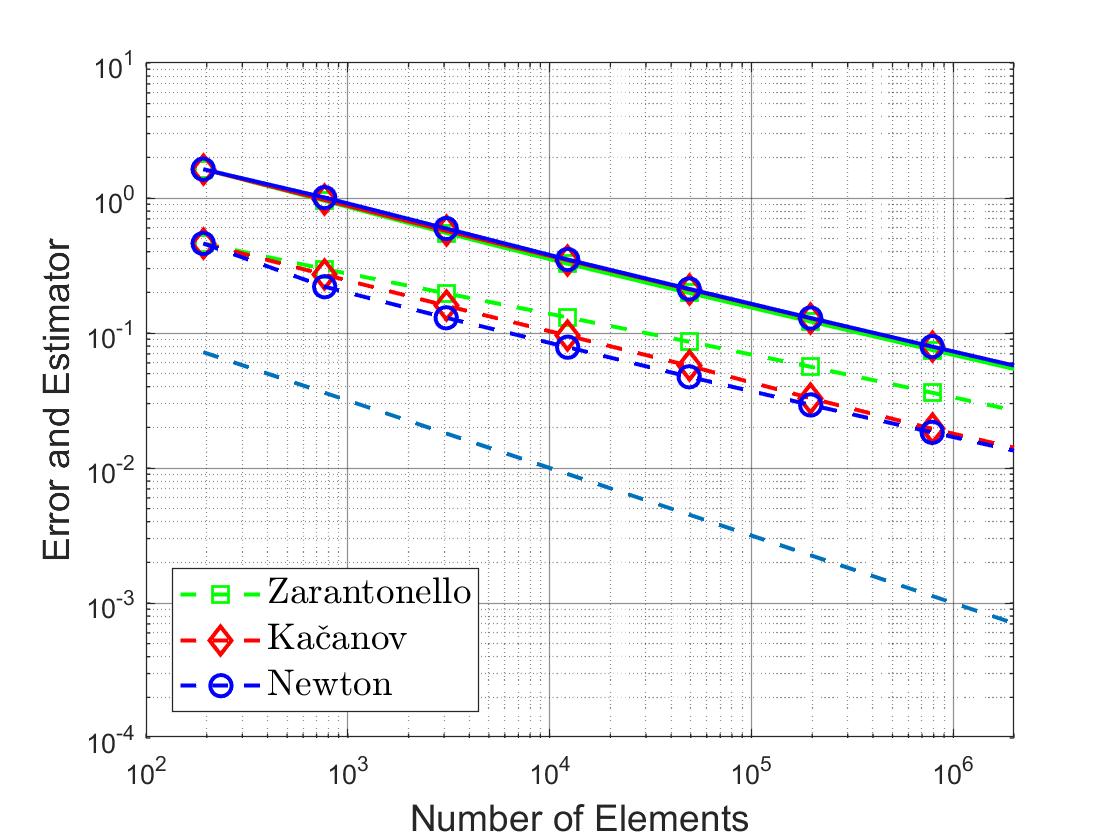}}
 \caption{Experiment~\ref{sec:nonsmoothsolution}: Convergence rates. Left: Adaptively refined meshes. Right: (Almost) uniform meshes.}
\end{figure}

\begin{figure} 
 \subfloat[$\lambda=0.1$ and $\theta_*=0.5$]{\includegraphics[width=0.48\textwidth]{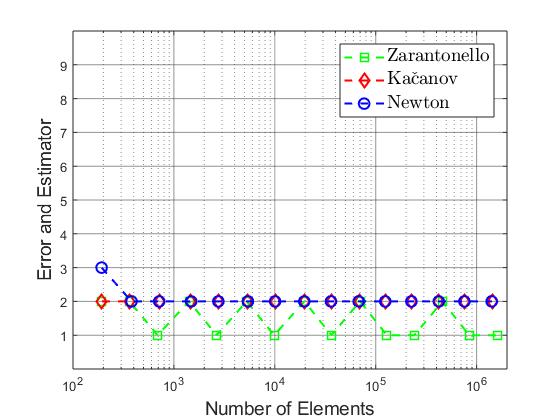}}\hfill
 \subfloat[$\lambda=0.001$ and $\theta_*=0.5$]{\includegraphics[width=0.48\textwidth]{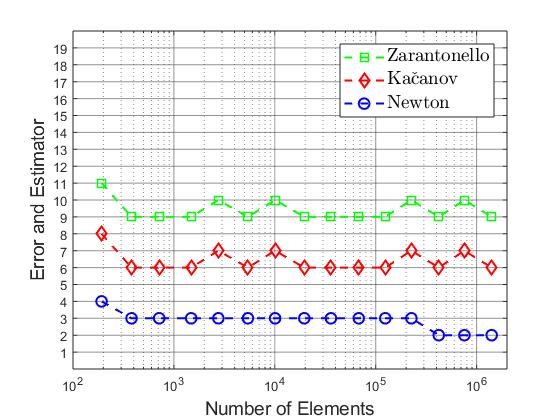}}
 \caption{Experiment~\ref{sec:nonsmoothsolution}: Number of iterations. } 
\end{figure}

\section{Conclusions}\label{sec:conclude}

We have established a contraction-like property of the unified iteration scheme \eqref{eq:fp1}, which is key for the convergence analysis of the adaptive ILG Algorithm~\ref{alg:praetal}. In particular, we were able to generalize some of the results from~\cite{GantnerHaberlPraetoriusStiftner:17} including the linear convergence of the general ILG procedure and the (uniform) boundedness of the number of linearization steps on each Galerkin space. We underline that the latter property constitutes an important stepping stone for the analysis of optimal computational complexity, cf.~\cite[\S6 and \S7]{GantnerHaberlPraetoriusStiftner:17}.

\bibliographystyle{amsplain}
\bibliography{references}

\providecommand{\bysame}{\leavevmode\hbox to3em{\hrulefill}\thinspace}
\providecommand{\MR}{\relax\ifhmode\unskip\space\fi MR }
\providecommand{\MRhref}[2]{%
  \href{http://www.ams.org/mathscinet-getitem?mr=#1}{#2}
}
\providecommand{\href}[2]{#2}
\begin{thebibliography}{10}

\bibitem{AmreinWihler:14}
M.~Amrein and T.~P. Wihler, \emph{{An adaptive {Newton}-method based on a
  dynamical systems approach}}, Commun. Nonlinear Sci. Numer. Simul.
  \textbf{19} (2014), no.~9, 2958--2973.

\bibitem{AmreinWihler:15}
\bysame, \emph{Fully adaptive {N}ewton-{G}alerkin methods for semilinear
  elliptic partial differential equations}, SIAM J. Sci. Comput. \textbf{37}
  (2015), no.~4, A1637--A1657.

\bibitem{AuradaFerrazPraetorius:12}
M.~Aurada, S.~Ferraz-Leite, and D.~Praetorius, \emph{Estimator reduction and
  convergence of adaptive {BEM}}, Appl. Numer. Math. \textbf{62} (2012), no.~6,
  787--801.

\bibitem{BernardiDakroubMansourSayah:15}
C.~Bernardi, J.~Dakroub, G.~Mansour, and T.~Sayah, \emph{{A posteriori analysis
  of iterative algorithms for a nonlinear problem}}, J. Sci. Comput.
  \textbf{65} (2015), no.~2, 672--697.

\bibitem{CarstensenFeischlPagePraetorius:14}
C.~Carstensen, M.~Feischl, M.~Page, and D.~Praetorius, \emph{Axioms of
  adaptivity}, Comput. Math. Appl. \textbf{67} (2014), no.~6, 1195--1253.

\bibitem{CongreveWihler:17}
S.~Congreve and T.~P. Wihler, \emph{Iterative {G}alerkin discretizations for
  strongly monotone problems}, Journal of Computational and Applied Mathematics
  \textbf{311} (2017), 457--472.

\bibitem{Doerfler:96}
W.~D\"orfler, \emph{A convergent adaptive algorithm for {P}oisson's equation},
  SINUM \textbf{33} (1996), 1106--1124.

\bibitem{El-AlaouiErnVohralik:11}
L.~El~Alaoui, A.~Ern, and M.~Vohral{\'\i}k, \emph{{Guaranteed and robust a
  posteriori error estimates and balancing discretization and linearization
  errors for monotone nonlinear problems}}, Comput. Methods Appl. Mech. Engrg.
  \textbf{200} (2011), no.~37-40, 2782--2795.

\bibitem{ErnVohralik:13}
A.~Ern and M.~Vohral{\'\i}k, \emph{{Adaptive inexact {Newton} methods with a
  posteriori stopping criteria for nonlinear diffusion {PDEs}}}, SIAM J. Sci.
  Comput. \textbf{35} (2013), no.~4, A1761--A1791.

\bibitem{GantnerHaberlPraetoriusStiftner:17}
G.~Gantner, A.~Haberl, D.~Praetorius, and B.~Stiftner, \emph{Rate optimal
  adaptive {FEM} with inexact solver for nonlinear operators}, IMA Journal of
  Numerical Analysis \textbf{38} (2018), no.~4, 1797--1831.

\bibitem{GarauMorinZuppa:11}
E.~M. Garau, P.~Morin, and C.~Zuppa, \emph{{Convergence of an adaptive {Ka{\v
  c}anov} {FEM} for quasi-linear problems}}, Appl. Numer. Math. \textbf{61}
  (2011), no.~4, 512--529.

\bibitem{HeidWihler:18}
P.~Heid and T.P. Wihler, \emph{Adaptive iterative linearization {G}alerkin
  methods for nonlinear problems}, Tech. Report 1808.04990, arxiv.org, 2018.

\bibitem{LakkisMakridakis:06}
O.~Lakkis and C.~Makridakis, \emph{Elliptic reconstruction and a posteriori
  error estimates for fully discrete linear parabolic problems}, Math. Comp.
  \textbf{75} (2006), no.~256, 1627--1658.

\bibitem{MakridakisNochetto:03}
C.~Makridakis and R.~H. Nochetto, \emph{{Elliptic reconstruction and a
  posteriori error estimates for parabolic problems}}, SIAM Journal on
  Numerical Analysis \textbf{41} (2003), no.~4, 1585--1594.

\bibitem{Mitchell:91}
W.F. Mitchell, \emph{Adaptive refinement for arbitrary finite-element spaces
  with hierarchical basis}, J. Comput. Appl. Math. \textbf{36} (1991), 65--78.

\bibitem{Necas:86}
J.~Ne{\v{c}}as, \emph{Introduction to the theory of nonlinear elliptic
  equations}, John Wiley and Sons, 1986.

\bibitem{Zarantonello:60}
E.~H. Zarantonello, \emph{Solving functional equations by contractive
  averaging}, Tech. Report 160, Mathematics {R}esearch {C}enter, {M}adison,
  {WI}, 1960.

\bibitem{Zeidler:90}
E.~Zeidler, \emph{Nonlinear functional analysis and its applications.
  {II}/{B}}, Springer-Verlag, New York, 1990.

\end{thebibliography}
\end{document}